\thanks{This work was partially supported by NSF grant DMS-1115455}
\newcommand{\C}{\mathbb{C}}
\newcommand{\Z}{\mathbb{Z}}
\newcommand{\F}{\mathbb{F}}
\newcommand{\kk}{\mathbb{F}}
\newcommand{\kbar}{\overline\kk}
\newcommand{\Fp}{\F_p}
\newcommand{\Fq}{\F_q}
\newcommand{\fa}{\mathfrak{a}}
\newcommand{\inkron}[2]{\genfrac {(}{)}{0.9pt}{}{#1}{#2}}
\newcommand{\M}{\textsf{M}}
\newcommand{\Ell}{{\rm Ell}}
\def\llog{\operatorname{llog}}
\def\lllog{\operatorname{lllog}}
\newcommand{\logpow}[2]{{\hspace{.7pt}\log\hspace{-1.3pt}{}^{#1}\hspace{-0.4pt}#2}}
\newcommand{\llogpow}[2]{{\hspace{.7pt}\llog\hspace{-1.3pt}{}^{#1}\hspace{-0.4pt}#2}}
\newcommand{\lllogpow}[2]{{\hspace{.7pt}\lllog\hspace{-1.3pt}{}^{#1}\hspace{-0.4pt}#2}}
\renewcommand{\vec}[1]{\boldsymbol{#1}}
\renewcommand{\O}{\mathcal{O}}
\def\disc{\operatorname{disc}}
\def\cl{\operatorname{cl}}
\def\Gal{\operatorname{Gal}}
\def\End{\operatorname{End}}
\def\ndiv{\centernot\mid}
\newcommand{\jt}{\tilde\jmath}
\newcommand{\jh}{\hat\jmath}
\newcommand{\gt}{\tilde{g}}
\newtheorem{theorem}{Theorem}
\newtheorem{proposition}[theorem]{Proposition}
\newtheorem{definition}[theorem]{Definition}
\newtheorem{remark}[theorem]{Remark}
\newtheorem{heuristic}{Heuristic}
\begin{document}

\title{On the evaluation of modular polynomials}
\author{Andrew V. Sutherland}
\address{Department of Mathematics\\Massachusetts Institute of Technology\\Cambridge, Massachusetts\ \  02139}
\email{drew@math.mit.edu}
\begin{abstract}
We present two algorithms that, given a prime $\ell$ and an elliptic curve $E/\Fq$,
directly compute the polynomial $\Phi_\ell(j(E),Y)\in\Fq[Y]$ whose roots are the
$j$-invariants of the elliptic curves that are $\ell$-isogenous to~$E$.
We do not assume that the modular polynomial $\Phi_\ell(X,Y)$ is given.
The algorithms may be adapted to handle other types of modular polynomials, and we
consider applications to point counting and the computation of endomorphism rings.
We demonstrate the practical efficiency of the algorithms by setting a new
point-counting record, modulo a prime $q$ with more than 5,000 decimal digits,
and by evaluating a modular polynomial of level $\ell =$ 100,019.
\end{abstract}

\maketitle

\section{Introduction}
Isogenies play a crucial role in the theory and application of elliptic curves.
A standard method for identifying (and computing) isogenies uses the classical modular polynomial $\Phi_\ell\in\Z[X,Y]$, which parameterizes pairs of $\ell$-isogenous elliptic curves in terms of their $j$-invariants.  More precisely,
over a field $\kk$ of characteristic not equal to $\ell$, the modular equation
\[
\Phi_\ell\bigl(j_1,j_2\bigr)=0
\]
holds if and only if $j_1$ and $j_2$ are the $j$-invariants of elliptic curves defined over~$\kk$ that are related by a cyclic isogeny of degree $\ell$.
In practical applications, $\kk$ is typically a finite field $\Fq$, and $\ell$ is a prime, as we shall assume throughout.
For the sake of simplicity we assume that $q$ is prime, but this is not essential.

A typical scenario is the following: we are given an elliptic curve $E/\Fq$ and wish to determine whether $E$ admits an $\ell$-isogeny defined over $\Fq$, and if so, to identify one or all of the elliptic curves that are $\ell$-isogenous to $E$.  This can be achieved by computing the instantiated modular polynomial
\[
\phi_\ell(Y)=\Phi_\ell(j(E),Y) \in \Fq[Y],
\]
and finding its roots in $\Fq$ (if any).
Each root is the $j$-invariant of an elliptic curve that is $\ell$-isogenous to $E$ over $\Fq$, and every such $j$-invariant is a root of $\phi_\ell(Y)$.

For large $\ell$ the main obstacle to obtaining $\phi_\ell$ is the size of $\Phi_\ell$, which is $O(\ell^3\log\ell)$ bits; several gigabytes for $\ell\approx 10^3$, and many terabytes for $\ell\approx 10^4$, see \cite[Table~1]{BrokerLauterSutherland:CRTModPoly}.
In practice, alternative modular polynomials that are smaller than $\Phi_\ell$ by a large constant factor are often used, but their size grows at the same rate and this quickly becomes the limiting factor, as noted in \cite[\S5.2]{Enge:ModularPolynomials} and elsewhere.
The following quote is taken from the 2009 INRIA Project-Team TANC report \cite[p.~9]{TANC:PointCounting}:
\smallskip

\emph{``\ldots computing modular polynomials remains the stumbling block for new point counting records.
Clearly, to circumvent the memory problems, one would need an algorithm that directly obtains the polynomial specialized in one variable."}
\smallskip

Here we present just such an algorithm (two in fact), based on the isogeny volcano approach of \cite{BrokerLauterSutherland:CRTModPoly}.
Our basic strategy is to compute the instantiated modular polynomial $\phi(Y)=\Phi_\ell(j(E),Y)$ modulo many ``suitable" primes $p$ and apply the explicit Chinese remainder theorem modulo $q$ (see \S\ref{subsection:crt} and \S\ref{subsection:modpoly} for a discussion of the explicit CRT and suitable primes).
However, two key issues arise.

First, if we simply lift the $j$-invariant $j(E)$ from $\Fq\simeq\Z/q\Z$ to $\Z$ and reduce the result modulo $p$, when we instantiate $\Phi_\ell(j(E),Y)$ the powers of $j(E)$ we compute may correspond to integers that are much larger than the coefficients of $\Phi_\ell$, forcing us to use many more CRT primes than we would otherwise need.
We address this issue by instead powering in $\Fq$, lifting the powers to $\Z$, and then reducing them modulo $p$.
This yields our first algorithm, which is well-suited to situations where~$q$ is much larger than $\ell$, say $\log q \approx \ell$, as in point-counting applications.

Second, to achieve the optimal space complexity, we must avoid computing $\Phi_\ell\bmod p$.
Indeed, if $\log q\approx \log \ell$, then $\Phi_\ell\bmod p$ will not be much smaller than $\Phi_\ell\bmod q$.
Our second algorithm uses an online approach to avoid storing all the coefficients of $\Phi_\ell\bmod p$ simultaneously.  This algorithm is well-suited to situations where $\log q$ is not dramatically larger than $\log \ell$, say $O(\log\ell)$ or $O(\logpow{2}{\ell})$.
This occurs, for example, in algorithms that compute the endomorphism ring of an elliptic curve \cite{BissonSutherland:Endomorphism},
or algorithms to evaluate isogenies of large degree \cite{Jao:LargeIsogenies}.

Under the generalized Riemann hypothesis (GRH), our first algorithm has an expected running time of $O(\ell^3\logpow{3}{\ell}\llog\ell)$ and uses $O(\ell^2\log\ell + \ell\log q)$ space, assuming $\log q = O(\ell\log\ell)$.\footnote{See Theorem~\ref{thm:alg1comp} for a more precise bound.  We write $\llog n$ for $\log\log n$ throughout.}
This time complexity is the same as (and in practice faster than) the time to compute $\Phi_\ell$, and the space complexity is reduced by up to a factor of $\ell$.
When $\log q\approx\ell$ the space complexity is nearly optimal, quasi-linear in the size of $\phi_\ell$.
The second algorithm uses $O(\ell^3(\log q+\log \ell)\logpow{1+o(1)}\ell)$ time and $O(\ell\log q+\ell\log\ell)$ space, under the GRH.
Its space complexity is optimal for $q=\Omega(\ell)$, and when $\log q = O(\logpow{2-\epsilon}{\ell})$ its time complexity is better than the time to compute $\Phi_\ell$.
For $\log q\gg \log^2\ell$ its running time becomes less attractive and the first algorithm may be preferred, or see \S\ref{subsection:hybrid} for a hybrid approach.

In conjunction with the SEA algorithm, the first algorithm allows us to compute the cardinality of an elliptic curve modulo a prime $q$ with a heuristic\footnote{The heuristic relates to the distribution of Elkies primes and is a standard assumption made when using the SEA algorithm (without it there is no advantage over Schoof's algorithm).}
 running time of $O(n^4\logpow{3}{n}\llog n)$, using $O(n^2\log n)$ space, where $n = \log q$.  To our knowledge, all alternative approaches applicable to prime fields increase at least one of these bounds by a factor of $n$ or more.
The running time is competitive with SEA implementations that rely on precomputed modular polynomials (as can be found in Magma \cite{Magma} and PARI \cite{PARI}), and can easily handle much larger values of $q$.

As an important practical optimization, we also evaluate modular polynomials $\phi_\ell^f(Y)= \Phi_\ell^f(f(E),Y)$ defined by modular functions $f(z)$ other than the $j$-function.  This includes the Weber $\frak{f}$-function, whose modular polynomials are smaller than the classical modular polynomial by a factor of 1728 and can be computed much more quickly (by roughly the same factor).  This speedup also applies to $\phi_\ell^f$.

To demonstrate the capability of the new algorithms, we use a modified version of the SEA algorithm to count points on an elliptic curve
modulo a prime of more than 5,000 decimal digits, and evaluate a modular polynomial of level $\ell=$ 100,019 modulo a prime of more than 25,000 decimal digits.
\subsection*{Acknowledgements} I am grateful to David Harvey for his assistance with the algorithms for fast polynomial arithmetic used in the computations described in~\S\ref{section:computations}, and to Daniel Kane for suggesting the hybrid approach outlined in \S\ref{subsection:hybrid}.
I also thank the referees for their comments and helpful suggestions.

\section{Background}
This section contains a brief summary of background material that can be found in standard references such as \cite{Lang:EllipticFunctions,Silverman:EllipticCurves1,Silverman:EllipticCurves2}, or in the papers \cite{BrokerLauterSutherland:CRTModPoly,Sutherland:HilbertClassPolynomials}, both of which exploit isogeny volcanoes using a CRT-based approach, as we do here.
For the sake of brevity, we recall only the results we need, and only in the generality necessary.

To simplify the presentation, we assume throughout that $\Fp$ and $\Fq$ denote prime fields with $\ell\ne p,q$, and, where relevant, that $q$ is sufficiently large (typically $q>2\ell$).  But this assumption is not needed for our main result; Algorithms~1 and~2 work correctly for any prime $q$ (even $q=\ell$), and can be extended to handle non-prime $q$.

\subsection{Isogenies}\label{subsection:isogenies}
Let $E$ be an elliptic curve defined over a field $\kk$.
Recall that an isogeny $\psi:E\to\tilde E$ is a morphism of elliptic curves that is also a group homomorphism from $E(\kbar)$ to $\tilde E(\kbar)$.
The kernel of a nonzero isogeny is a finite subgroup of~$E(\kbar)$, and when $\psi$ is separable, the size of its kernel is equal to its degree. 
Conversely, every finite subgroup $G$ of $E(\kbar)$ is the kernel of a separable isogeny (defined over the fixed field of the stabilizer of $G$ in  $\Gal(\kbar/\kk)$).
We say that $\psi$ is \emph{cyclic} if its kernel is cyclic, and call $\psi$ an $N$-\emph{isogeny} when it has degree $N$.
Note that an isogeny of prime degree $\ell\ne{\rm char}(\kk)$ is necessarily cyclic and separable.

The classical modular polynomial $\Phi_N$ is the minimal polynomial of the function $j(N z)$ over the field $\C(j)$, where $j(z)$ is the modular $j$-function.
As a polynomial in two variables, $\Phi_N\in\Z[X,Y]$ is symmetric in $X$ and $Y$ and
has the defining property that the roots of $\Phi_\ell(j(E),Y)$ are precisely the $j$-invariants of the elliptic curves $\tilde E$ that are related to $E$ by a cyclic $N$-isogeny.  In this paper $N=\ell$ is prime, in which case $\Phi_\ell(X,Y)$ has degree $\ell+1$ in each variable.

If $E$ is given by a short Weierstrass equation $Y^2=X^3+a_4X + a_6$, then $\psi$ can be expressed in the form $\psi(x,y)=(\psi_1(x),cy\frac{d}{dx}\psi_1(x))$ for some $c\in\kbar^*$.
When $c=1$ we say that $\psi$ and its image are \emph{normalized}.
Given a finite subgroup $G$ of $E(\kbar)$, a normalized isogeny with $G$ as its kernel can be constructed using V\'elu's formulae~\cite{Velu:Isogenies}, along with an explicit equation for its image $\tilde E$.
Conversely, suppose we are given a root $\jt=j(\tilde E)$ of $\phi_\ell(Y)=\Phi_\ell(j(E),Y)$, and also the values of $\Phi_X(j,\jt)$, $\Phi_Y(j,\jt)$, $\Phi_{XX}(j,\jt)$, $\Phi_{XY}(j,\jt)$, and $\Phi_{YY}(j,\jt)$, where $j=j(E)$ and
\[
\textstyle{\Phi_X=\frac{\partial}{\partial X}\Phi_\ell,\medspace\medspace
\Phi_Y=\frac{\partial}{\partial Y}\Phi_\ell,\medspace\medspace
\Phi_{XX}=\frac{\partial^2}{\partial X^2}\Phi_\ell,\medspace\medspace
\Phi_{XY}=\frac{\partial^2}{\partial X\partial Y}\Phi_\ell,\medspace\medspace
\Phi_{YY}=\frac{\partial^2}{\partial Y^2}\Phi_\ell.}
\]
To this data we may apply an algorithm of Elkies \cite{Elkies:AtkinBirthday} that computes an equation for $\tilde E$ that is the image of a normalized $\ell$-isogeny $\psi\colon E\to\tilde E$, along with an explicit description of its kernel: the monic polynomial $h_\ell(X)$ whose roots are the abcissae of the non-trivial points in $\ker \psi$; see \cite[Alg.\ 27]{Galbraith:MathPKC}.
The quantities $\Phi_{XX}(j,\jt)$, $\Phi_{XY}(j,\jt)$, and $\Phi_{YY}(j,\jt)$ are not strictly necessary; the equation for $\tilde E$ depends only on $j$, $\jt$, $\Phi_X(j,\jt)$ and $\Phi_Y(j,\jt)$, and we may then apply algorithms of Bostan et al.~\cite{Bostan:FastIsogenies} to compute $h_\ell(X)$ (and an equation for $\psi$) directly from $E$ and $\tilde E$.

\subsection{Explicit CM theory}\label{subsection:cm}

Recall that the endomorphism ring of an ordinary elliptic curve $E$ over a finite field $\Fp$ is isomorphic to an order $\O$ in an imaginary quadratic field $K$.  In this situation $E$ is said to have \emph{complex multiplication} (CM) by $\O$.
The elliptic curve $E/\Fp$ is the reduction of an elliptic curve $\hat E/\C$ that also has CM by $\O$.  The $j$-invariant of $\hat E$ generates the ring class field $K_\O$ of $\O$, and its minimal polynomial over $K$ is the \emph{Hilbert class polynomial} $H_\O\in\Z[X]$, whose degree is the class number $h(\O)$.\footnote{As in \cite{Belding:HilbertClassPolynomial}, we call $H_\O$ a Hilbert class polynomial even when $\O$ is not the maximal order.}
The prime $p$ splits completely in $K_\O$, and~$H_\O$ splits completely in $\Fp[X]$.
For $p>3$, the prime $p$ splits completely in $K_\O$ if and only if it satisfies the norm equation $4p=t^2-v^2D$, where $D=\disc(\O)$,
and for $D<-4$ the integers $t=t(p)$ and $v=v(p)$ are uniquely determined up to sign.

We define the set
\[
\Ell_\O(\Fp)=\{j(E): E/\Fp \text{ with } \End(E)\simeq\O\},
\]
which consists of the roots of $H_\O$ in $\Fp$.
Let $\iota\colon\O\hookrightarrow\End(E)$ denote the normalized embedding (so $\iota(\alpha)^*\omega=\alpha\omega$ for all $\alpha\in\O$ and invariant differentials $\omega$ on~$E$; c.f. \cite[Prop.\ II.1.1]{Silverman:EllipticCurves2}).
The ideals of $\O$ act on $\Ell_\O(\Fp)$ via isogenies as follows.
Let $\fa$ be an $\O$-ideal of norm $N$, and define $E[\fa]=\cap_{\alpha\in\fa}\ker\iota(\alpha)$.
There is a separable $N$-isogeny from $E$ to $\tilde{E}=E/E[\fa]$, and the action of $\fa$ sends $j(E)$ to $j(\tilde{E})$.
Principal ideals act trivially, and this induces a regular action of the class group $\cl(\O)$ on $\Ell_O(\Fp)$.
Thus $\Ell_O(\Fp)$ is a principal homogeneous space, a \emph{torsor}, for $\cl(\O)$.

Writing the $\cl(\O)$-action on the left, we note that if $\fa$ has prime norm $\ell$, then $\Phi_\ell(j,[\fa]j)=0$ for all $j\in\Ell_\O(\Fp)$.
For $\ell$ not dividing $v(p)$, the polynomial $\phi_\ell(Y)=\Phi_\ell(j,Y)$ has either one or two roots in $\Fp$, depending on whether $\ell$ ramifies or splits in $K$.
In the latter case, the two roots $[\fa]j$ and $[\fa^{-1}]j$ can be distinguished using the Elkies kernel polynomial $h_\ell(X)$, as described in \cite[\S 5]{Broker:pAdicClassPolynomial} and \cite[\S 3]{Galbraith:GHSattack}.

\subsection{Polycyclic presentations}\label{subsection:pcp}
In order to efficiently realize the action of $\cl(\O)$ on $\Ell_\O(\Fp)$, it is essential to represent elements of $\cl(\O)$ in terms of a set of generators with small norm.
We will choose $\O$ so that $\cl(\O)$ is generated by ideals of norm bounded by $O(1)$, via \cite[Thm.\ 3.3]{BrokerLauterSutherland:CRTModPoly}, but these generators will typically not be independent.
Thus as explained in \cite[\S 5.3]{Sutherland:HilbertClassPolynomials}, we use polycyclic presentations.

Any sequence of generators $\vec\alpha=(\alpha_1,\ldots,\alpha_k)$ for a finite abelian group $G$ defines a polycyclic series
\[
1=G_0\lhd G_1\lhd\ldots\lhd G_{k-1}\lhd G_k =G,
\]
with $G_i=\langle\alpha_1,\ldots,\alpha_i\rangle$, in which every quotient $G_i/G_{i-1}\simeq\langle \alpha_i\rangle$ is necessarily cyclic.
We associate to $\vec\alpha$ the sequence of \emph{relative orders} $r(\vec\alpha)=(r_1,\ldots,r_k)$ defined by $r_i=|G_i:G_{i-1}|$.
Every element $\beta\in G$ has a unique $\vec\alpha$-\emph{representation} of the form
\[
\beta = \vec{\alpha}^{\vec{e}}=\alpha_1^{e_1}\cdots\alpha_k^{e_k}\qquad (0\le e_i < r_i).
\]
We also associate to $\vec\alpha$ the matrix of power relations $s(\vec\alpha)=[s_{ij}]$ defined by
\[
\alpha_i^{r_i} = \alpha_1^{s_{i,1}}\alpha_2^{s_{i,2}}\cdots \alpha_{i-1}^{s_{i,i-1}}\qquad (0\le s_{ij}<r_j),
\]
with $s_{ij}=0$ for $i\le j$.

We call $\vec\alpha$, together with $r(\vec\alpha)$ and $s(\vec\alpha)$, a \emph{(polycyclic) presentation} for $G$,
and if all the $r_i$ are greater than 1, we say that the presentation is \emph{minimal}.
A generic algorithm to compute a minimal polycyclic presentation is given in \cite[Alg.\ 2.2]{Sutherland:HilbertClassPolynomials}.
Having constructed such an $\vec\alpha$, we can efficiently enumerate $G=\cl(\O)$ (or the torsor $\Ell_\O(\Fq)$, given a starting point), by enumerating $\vec{\alpha}$-representations.

\subsection{Explicit CRT}\label{subsection:crt}
Let $p_1,\ldots,p_n$ be primes with product $M$, let $M_i=M/p_i$, and let $a_iM_i\equiv 1 \bmod p_i$.
If $c\in\Z$ satisfies $c\equiv c_i\bmod p_i$, then $c\equiv\sum_i c_ia_iM_i\bmod M$.
If $M > 2|c|$, this congruence uniquely determines~$c$.
This is the usual CRT method.

Now suppose $M > 4|c|$ and let $q$ be a prime (or any integer).
Then we may apply the \emph{explicit CRT mod $q$} \cite[Thm.\ 3.1]{Bernstein:ModularExponentiation} to compute
\begin{equation}\label{eq:ecrt}
c\equiv \Bigl(\sum_i c_ia_iM_i - rM\Bigr)\bmod q,
\end{equation}
where $r$ is the closest integer to $\sum_i c_ia_i/p_i$; when computing $r$, it suffices to approximate each $c_ia_i/p_i$ to within $1/(4n)$, by \cite[Thm.\ 2.2]{Bernstein:ModularExponentiation}.

As described in \cite[\S 6]{Sutherland:HilbertClassPolynomials}, we may use the explicit CRT to simultaneously compute $c\bmod q$ for many integers $c$ (the coefficients of $\phi_\ell$, for example), using an \emph{online algorithm}.
We first precompute the $a_i$ and $a_iM_i \bmod q$.
Then, for each prime~$p_i$, we determine the values $c_i$ for all the coefficients $c$ (by computing $\phi_\ell\bmod p_i$), update two partial sums for each coefficient, one for $\sum c_ia_iM_i\bmod q$ and one for $\sum c_ia_i/p_i$, and then discard the $c_i$'s.
When the computations for all the~$p_i$ have been completed (these may be performed in parallel), we compute $r$ and apply \eqref{eq:ecrt} for each coefficient.
The space required by the partial sums is just $O(\log q)$ bits per coefficient.  
See \cite[\S 6]{Sutherland:HilbertClassPolynomials} for further details, including algorithms for each step.

\subsection{Modular polynomials via isogeny volcanoes}\label{subsection:modpoly}
For distinct primes $\ell$ and~$p$, we define the \emph{graph of $\ell$-isogenies} $\Gamma_\ell(\Fp)$, with vertex set $\Fp$ and edges $(j_1,j_2)$ present if and only if $\Phi_\ell(j_1,j_2)=0$.
Ignoring the connected components of $0$ and $1728$, the ordinary components of $\Gamma_\ell(\Fp)$ are $\ell$-\emph{volcanoes} \cite{Fouquet:IsogenyVolcanoes,Kohel:thesis}, a term we take to include cycles as a special case \cite{Sutherland:HilbertClassPolynomials}.
In this paper we focus on $\ell$-volcanoes of a particular form, for which we can compute $\Phi_\ell\bmod p$ very quickly, via \cite[Alg.~2.1]{BrokerLauterSutherland:CRTModPoly}.

Let $\O$ be an order in an imaginary quadratic field $K$ with maximal order $\O_K$, let $\ell$ be an odd prime not dividing $[\O_K:\O]$, and assume $D=\disc(\O)<-4$.
Let $p$ be a prime of the form $4p=t^2-\ell^2v^2D$ with $\ell\ndiv v$ and $p\equiv 1\bmod \ell$.
Then $p$ splits completely in the ring class field of $\O$, but not in the ring class field of the order with index $\ell^2$ in $\O$.
The requirement $p\equiv 1\bmod\ell$ ensures that for $j(E)\in\Ell_\O(\Fp)$ we can choose $E$ so that $E[\ell]\subset E(\Fp)$, which is critical to the efficiency of both the algorithm in \cite{BrokerLauterSutherland:CRTModPoly} and our algorithms here.

The components of $\Gamma_\ell(\Fp)$ that intersect $\Ell_\O(\Fp)$ are isomorphic $\ell$-volcanoes with two levels: the \emph{surface}, whose vertices lie in $\Ell_\O(\Fp)$, and the \emph{floor}, whose vertices lie in $\Ell_{\O'}(\Fp)$, where $\O'$ is the order of index $\ell$ in $\O$.
Each vertex on the surface is connected to $1+\inkron{D}{\ell}=0,1$ or $2$ \emph{siblings} on the surface, and $\ell-\inkron{D}{\ell}$ \emph{children} on the floor.
An example with $\ell=7$ and $\inkron{D}{\ell}=1$ is shown below:
\vspace{-2pt}

\begin{figure}[htp]
\begin{tikzpicture}
\draw (-4.5,0) ellipse (1 and 0.1);
\draw (-1.5,0) ellipse (1 and 0.1);
\draw (1.5,0) ellipse (1 and 0.1);
\draw (4.5,0) ellipse (1 and 0.1);
\draw (-4.5,0.1) -- (-4.6,-0.06);
\draw (-4.5,0.1) -- (-4.56,-0.06);
\draw (-4.5,0.1) -- (-4.52,-0.06);
\draw (-4.5,0.1) -- (-4.48,-0.06);
\draw (-4.5,0.1) -- (-4.44,-0.06);
\draw (-4.5,0.1) -- (-4.4,-0.06);
\draw[fill=red] (-4.5,0.1) circle (0.04);
\draw (-1.5,0.1) -- (-1.6,-0.05);
\draw (-1.5,0.1) -- (-1.56,-0.05);
\draw (-1.5,0.1) -- (-1.52,-0.05);
\draw (-1.5,0.1) -- (-1.48,-0.05);
\draw (-1.5,0.1) -- (-1.44,-0.05);
\draw (-1.5,0.1) -- (-1.4,-0.05);
\draw[fill=red] (-1.5,0.1) circle (0.04);
\draw (1.5,0.1) -- (1.6,-0.05);
\draw (1.5,0.1) -- (1.56,-0.05);
\draw (1.5,0.1) -- (1.52,-0.05);
\draw (1.5,0.1) -- (1.48,-0.05);
\draw (1.5,0.1) -- (1.44,-0.05);
\draw (1.5,0.1) -- (1.4,-0.05);
\draw[fill=red] (1.5,0.1) circle (0.04);
\draw (4.5,0.1) -- (4.60,-0.06);
\draw (4.5,0.1) -- (4.56,-0.06);
\draw (4.5,0.1) -- (4.52,-0.06);
\draw (4.5,0.1) -- (4.48,-0.06);
\draw (4.5,0.1) -- (4.44,-0.06);
\draw (4.5,0.1) -- (4.4,-0.06);
\draw[fill=red] (4.5,0.1) circle (0.04);
\draw (-5,-0.1) -- (-5.35,-0.7);
\draw[fill=red] (-5.35,-0.7) circle (0.04);
\draw (-5,-0.1) -- (-5.21,-0.7);
\draw[fill=red] (-5.21,-0.7) circle (0.04);
\draw (-5,-0.1) -- (-5.07,-0.7);
\draw[fill=red] (-5.07,-.7) circle (0.04);
\draw (-5,-0.1) -- (-4.93,-0.7);
\draw[fill=red] (-4.93,-0.7) circle (0.04);
\draw (-5,-0.1) -- (-4.79,-0.7);
\draw[fill=red] (-4.79,-0.7) circle (0.04);
\draw (-5,-0.1) -- (-4.65,-0.7);
\draw[fill=red] (-4.65,-0.7) circle (0.04);
\draw[fill=red] (-5,-0.1) circle (0.04);

\draw (-4,-0.1) -- (-4.35,-0.7);
\draw[fill=red] (-4.35,-0.7) circle (0.04);
\draw (-4,-0.1) -- (-4.21,-0.7);
\draw[fill=red] (-4.21,-0.7) circle (0.04);
\draw (-4,-0.1) -- (-4.07,-0.7);
\draw[fill=red] (-4.07,-0.7) circle (0.04);
\draw (-4,-0.1) -- (-3.93,-0.7);
\draw[fill=red] (-3.93,-0.7) circle (0.04);
\draw (-4,-0.1) -- (-3.79,-0.7);
\draw[fill=red] (-3.79,-0.7) circle (0.04);
\draw (-4,-0.1) -- (-3.65,-0.7);
\draw[fill=red] (-3.65,-0.7) circle (0.04);
\draw[fill=red] (-4,-0.1) circle (0.04);

\draw (-2,-0.1) -- (-2.35,-0.7);
\draw[fill=red] (-2.35,-0.7) circle (0.04);
\draw (-2,-0.1) -- (-2.21,-0.7);
\draw[fill=red] (-2.21,-0.7) circle (0.04);
\draw (-2,-0.1) -- (-2.07,-0.7);
\draw[fill=red] (-2.07,-0.7) circle (0.04);
\draw (-2,-0.1) -- (-1.93,-0.7);
\draw[fill=red] (-1.93,-0.7) circle (0.04);
\draw (-2,-0.1) -- (-1.79,-0.7);
\draw[fill=red] (-1.79,-0.7) circle (0.04);
\draw (-2,-0.1) -- (-1.65,-0.7);
\draw[fill=red] (-1.65,-0.7) circle (0.04);
\draw[fill=red] (-2,-0.1) circle (0.04);

\draw (-1,-0.1) -- (-1.35,-0.7);
\draw[fill=red] (-1.35,-0.7) circle (0.04);
\draw (-1,-0.1) -- (-1.21,-0.7);
\draw[fill=red] (-1.21,-0.7) circle (0.04);
\draw (-1,-0.1) -- (-1.07,-0.7);
\draw[fill=red] (-1.07,-0.7) circle (0.04);
\draw (-1,-0.1) -- (-0.93,-0.7);
\draw[fill=red] (-0.93,-0.7) circle (0.04);
\draw (-1,-0.1) -- (-0.79,-0.7);
\draw[fill=red] (-0.79,-0.7) circle (0.04);
\draw (-1,-0.1) -- (-0.65,-0.7);
\draw[fill=red] (-0.65,-0.7) circle (0.04);
\draw[fill=red] (-1,-0.1) circle (0.04);

\draw (1,-0.1) -- (1.35,-0.7);
\draw[fill=red] (1.35,-0.7) circle (0.04);
\draw (1,-0.1) -- (1.21,-0.7);
\draw[fill=red] (1.21,-0.7) circle (0.04);
\draw (1,-0.1) -- (1.07,-0.7);
\draw[fill=red] (1.07,-0.7) circle (0.04);
\draw (1,-0.1) -- (0.93,-0.7);
\draw[fill=red] (0.93,-0.7) circle (0.04);
\draw (1,-0.1) -- (0.79,-0.7);
\draw[fill=red] (0.79,-0.7) circle (0.04);
\draw (1,-0.1) -- (0.65,-0.7);
\draw[fill=red] (0.65,-0.7) circle (0.04);
\draw[fill=red] (1,-0.1) circle (0.04);

\draw (2,-0.1) -- (2.35,-0.7);
\draw[fill=red] (2.35,-0.7) circle (0.04);
\draw (2,-0.1) -- (2.21,-0.7);
\draw[fill=red] (2.21,-0.7) circle (0.04);
\draw (2,-0.1) -- (2.07,-0.7);
\draw[fill=red] (2.07,-0.7) circle (0.04);
\draw (2,-0.1) -- (1.93,-0.7);
\draw[fill=red] (1.93,-0.7) circle (0.04);
\draw (2,-0.1) -- (1.79,-0.7);
\draw[fill=red] (1.79,-0.7) circle (0.04);
\draw (2,-0.1) -- (1.65,-0.7);
\draw[fill=red] (1.65,-0.7) circle (0.04);
\draw[fill=red] (2,-0.1) circle (0.04);

\draw (4,-0.1) -- (4.35,-0.7);
\draw[fill=red] (4.35,-0.7) circle (0.04);
\draw (4,-0.1) -- (4.21,-0.7);
\draw[fill=red] (4.21,-0.7) circle (0.04);
\draw (4,-0.1) -- (4.07,-0.7);
\draw[fill=red] (4.07,-0.7) circle (0.04);
\draw (4,-0.1) -- (3.93,-0.7);
\draw[fill=red] (3.93,-0.7) circle (0.04);
\draw (4,-0.1) -- (3.79,-0.7);
\draw[fill=red] (3.79,-0.7) circle (0.04);
\draw (4,-0.1) -- (3.65,-0.7);
\draw[fill=red] (3.65,-0.7) circle (0.04);
\draw[fill=red] (4,-0.1) circle (0.04);

\draw (5,-0.1) -- (5.35,-0.7);
\draw[fill=red] (5.35,-0.7) circle (0.04);
\draw (5,-0.1) -- (5.21,-0.7);
\draw[fill=red] (5.21,-0.7) circle (0.04);
\draw (5,-0.1) -- (5.07,-0.7);
\draw[fill=red] (5.07,-0.7) circle (0.04);
\draw (5,-0.1) -- (4.93,-0.7);
\draw[fill=red] (4.93,-0.7) circle (0.04);
\draw (5,-0.1) -- (4.79,-0.7);
\draw[fill=red] (4.79,-0.7) circle (0.04);
\draw (5,-0.1) -- (4.65,-0.7);
\draw[fill=red] (4.65,-0.7) circle (0.04);
\draw[fill=red] (5,-0.1) circle (0.04);
\end{tikzpicture}
\end{figure}

Provided $h(\O)\ge \ell+2$, this set of $\ell$-volcanoes contains enough information to completely determine $\Phi_\ell\bmod p$.
This is the basis of the algorithm in \cite[Alg.\ 2.1]{BrokerLauterSutherland:CRTModPoly}, which we adapt here.
Selecting a sufficiently large set of such primes $p$ allows one to compute $\Phi_\ell$ over $\Z$ (via the CRT), or modulo an arbitrary prime~$q$ (via the explicit CRT).
In order to achieve the best complexity bounds, it is important to choose both the order $\O$ and the primes $p$ carefully.
We thus introduce the following definitions, in which $c_1$ and $c_2$ are fixed constants that do not depend on $\ell$ or $\O$ (in our implementation we used $c_1=1.5$ and $c_2=256$).

\begin{definition}\label{def:orders}
Let $\O$ be a quadratic order with discriminant $D=u^2D_0<0$, with $D_0$ fundamental, and let $c_1,c_2>1$ be constants.
We say that $\O$ is \textbf{suitable} for $\ell$ if
\begin{align*}
\operatorname{(i)}\  \ell+2\ \le h(\O)\le c_1\ell,\quad \operatorname{(ii)}\ 4 < |D_0|\le c_2^2,\quad \operatorname{(iii)}\ \ell^2 \le |D|\le c_2^2\ell^2,\\
\operatorname{(iv)}\ \gcd(u,2\ell D)=1,\quad \operatorname{(v)}\ q\le \min(c_2,\ell) \text{ for all primes } q|u.
\end{align*}
\end{definition}
This definition combines the criteria in \cite[Def.\ 4.2]{BrokerLauterSutherland:CRTModPoly} and \cite[Thm.\ 5.1]{BrokerLauterSutherland:CRTModPoly}.
Provided that $c_1$ and $c_2$ are not too small, suitable orders exist for every odd prime $\ell$; with $c_1=4$ and $c_2=16$, for example, we may use orders with $D=-7\cdot 3^{2n}$ for all $\ell >3$.  Ideally we want $c_1$ to be as close to 1 as possible, but this makes it harder to find suitable orders.  For the asymptotic analysis, any values of $c_1$ and $c_2$ will do.

\begin{definition}\label{def:primes}
A prime $p$ is \textbf{suitable} for $\ell$ and $\O$ if $p\equiv 1\bmod \ell$ and $p$ satisfies $4p=t^2-\ell^2v^2D$ for some $t,v\in\Z$ with $\ell\ndiv v$ and $\omega(v)\le 2\log(\log v + 3)$.
\end{definition}

The function $\omega(v)$ counts the distinct prime divisors of $v$.
The bound on $\omega(v)$ ensures that if $\O$ is suitable for $\ell$ then many small primes split in $\O$ and do not divide $u$ or $v$.
Such primes allow us to more efficiently enumerate $\cl(\O)$ and $\cl(\O')$.

\subsection{Selecting primes with the GRH}\label{subsection:primes}
In order to apply the isogeny volcano method to compute $\Phi_\ell\bmod q$ (or $\phi_\ell\bmod q$, as we shall do), we need a sufficiently large set $S$ of suitable primes $p$.
We deem $S$ to be sufficiently large whenever $\sum_{p\in S} \log p \ge B + \log 4$, where $B$ is an upper bound on the logarithmic height of the integers whose reductions mod $q$ we wish to compute with the explicit CRT.
For $\Phi_\ell(X,Y)=\sum_{i,j} a_{ij}X^iY^j$, we may bound $h(\Phi_\ell)=\log\max_{i,j}|a_{ij}|$ using
\begin{equation}\label{eq:B1}
h(\Phi_\ell) \le 6\ell\log \ell + 18\ell,\qquad\text{and}\qquad h(\Phi_\ell) \le 6\ell\log\ell + 16\ell + 14\sqrt{\ell}\log\ell,
\end{equation}
as proved in \cite{BrokerSutherland:PhiHeightBound} (we prefer the latter bound when $\ell>3187$).

Heuristically (and in practice), it is easy to construct the set $S$.
Given an order~$\O$ of discriminant~$D$ suitable for $\ell$, we fix $v=2$ if $D\equiv 1\bmod 8$ and $v=1$ otherwise, and for increasing $t\equiv 2\bmod \ell$ of correct parity we test whether $p=(t^2-v^2\ell^2D)/4$ is prime.
We add each prime value of $p$ to $S$, and stop when $S$ is sufficiently large.

Unfortunately, we cannot prove that this method will find \emph{any} primes, even under the GRH.
Instead, we use Algorithm~6.2 in~\cite{BrokerLauterSutherland:CRTModPoly}, which picks an upper bound~$x$ and generates random integers $t$ and $v$ in suitable intervals to obtain candidate primes $p=(t^2-v^2\ell^2D)/4\le x$ that are then tested for primality.
The algorithm periodically increases $x$, so its expected running time is $O(B^{1+\epsilon})$, even without the GRH.
To ensure that the bound on $\omega(v)$ in Definition~\ref{def:primes} is satisfied, unsuitable $v$'s are discarded; this occurs with negligible probability.

Under the GRH, there are effective constants $c_3,c_4 > 0$ such that $x\ge c_3\ell^6\logpow{4}{\ell}$ guarantees at least $c_4\ell^3\logpow{3}{\ell}$ suitable primes less than $x$, by \cite[Thm.\ 4.4]{BrokerLauterSutherland:CRTModPoly}.
Asymptotically, this is far more than the $O(\ell)$ primes we need to compute $\Phi_\ell\bmod q$.
Here we may consider larger values of $B$, and in general, $x=O(B^2 + \ell^6\logpow{4}{\ell})$ suffices.
We note that $S$ contains $O(B/\log B)$ primes (unconditionally), and under the GRH we have $\log p = O(\log B+\log\ell)$ for all $p\in S$.

\section{Algorithms}\label{section:algorithms}
Let $q$ be a prime and let $E$ be an elliptic curve over $\Fq$.
A simple algorithm to compute $\phi_\ell(Y)=\Phi_\ell(j(E),Y)\in\Fq[Y]$ with the explicit CRT works as follows.
Let~$\jh$ be the integer in $[0,q-1]$ corresponding to $j(E)\in\Fq\simeq\Z/q\Z$.
For a sufficiently large set $S$ of suitable primes $p$, compute $\Phi_\ell(X,Y)\bmod p$ using the isogeny volcano algorithm and evaluate $\Phi_\ell(\jh,Y)\bmod p$ to obtain $\bar\phi_\ell\in\Fp[Y]$, and use the explicit CRT mod $q$ to eventually obtain $\phi_\ell\in\Fq[Y]$.

This na\"ive algorithm suffers from two significant defects.
The most serious is that we may now require a much larger set $S$ than is needed to compute $\Phi_\ell\bmod q$.
Compared to the coefficients of $\Phi_\ell$, which have height $h(\Phi_\ell)=O(\ell\log\ell)$ bounded by \eqref{eq:B1},
we now need to use the $O(\ell\log\ell+\ell\log q)$ bound
\begin{equation}\label{eq:B2}
h(\Phi_\ell(\jh,Y)) \le  h(\Phi_\ell) + (\ell+1)\log q + \log (\ell+2),
\end{equation}
since $\Phi_\ell(\jh,Y)$ involves powers of $\jh$ up to $\jh^{\ell+1}$.

If $\log q$ is comparable to $\log \ell$, then the difference between the bounds in \eqref{eq:B1} and~\eqref{eq:B2} may be negligible.
But when $\log q$ is comparable to $\ell$, using the bound in~\eqref{eq:B2} increases the running time dramatically.
This issue is addressed by Algorithm~1.

The second defect of the na\"ive algorithm is that although its space complexity may be significantly better than the $O(\ell^2\log q)$ space required to compute $\Phi_\ell\bmod q$, it is still quasi-quadratic in $\ell$.
But the size of $\phi_\ell$ is linear in $\ell$, so we might hope to do better,
and indeed we can.  This is achieved by Algorithm~2.

A hybrid approach that combines aspects of both algorithms is discussed in \S\ref{subsection:hybrid}.

\subsection{Algorithm 1}\label{subsection:alg1}
The increase in the height bound from \eqref{eq:B1} to \eqref{eq:B2} is caused by the fact that \emph{we are exponentiating in the wrong ring}.
Rather than lifting $j(E)\in\Fq$ to the integer $\jh$ and computing powers of its reduction in $\Fp$ (which simulates powering in $\Z$), we should instead compute powers $j(E), j(E)^2, \ldots, j(E)^{\ell+1}$ in $\Fq$, lift these values to integers $\hat x_1,\hat x_2,\ldots,\hat x_{\ell+1}$, and work with their reductions in $\Fp$, as in \cite[\S4.4]{Sutherland:CMmethod} (a similar strategy is used in \cite{KedlayaUmans:ModularComposition}).
Of course the reductions of $\hat x_1, \hat x_2,\ldots, \hat x_{\ell+1}$ need not correspond to powers of any particular element in $\Fp$;  nevertheless, if we simply replace each occurrence of~$X^i$ in the modular polynomial $\Phi_\ell(X,Y)\bmod p$ with $\hat x_i\bmod p$, we achieve the same end result using a much smaller height bound.

We now present Algorithm~1 to compute $\phi(Y)=\phi_\ell(Y)=\Phi_\ell(j(E),y)$.  If desired, the algorithm can also compute the polynomials
$\phi_X(Y)=(\partial\Phi_\ell/\partial X)(j(E),Y)$ and $\phi_{XX}(Y)=(\partial^2\Phi_\ell/\partial X^2)(j(E),Y)$, which may be used to compute normalized isogenies, as described in \S\ref{subsection:normalized}.  These optional steps are shown in parentheses.
\smallskip

\noindent
\textbf{Algorithm 1}\\
\textbf{Input:} An odd prime $\ell$, a prime $q$, and $j(E)\in\Fq$.\\
\textbf{Output:} The polynomial $\phi(Y)=\Phi_\ell(j(E),Y)\in\Fq[Y]$ (and $\phi_X(Y)$ and $\phi_{XX}(Y)$).
\begin{enumerate}[1.]
\item Select an order $\O$ suitable for $\ell$ and a set of suitable primes $S$ (see~\S\ref{subsection:primes}),\\
using the height bound $B=6\ell\log\ell+18\ell+\log q + 3\log(\ell+2)$.
\item Compute the Hilbert class polynomial $H_\O(X)$ via \cite[Alg.\  2]{Sutherland:HilbertClassPolynomials}.
\item Perform CRT precomputation mod $q$ using $S$ (see~\S\ref{subsection:crt}).
\item Compute integers $\hat x_i\in[0,q-1]$ such that $\hat x_i\equiv j(E)^i\bmod q$, for $0\le i\le\ell+1$.
\item For each prime $p\in S$:
\begin{enumerate}[a.]
\item Compute $\Phi_\ell(X,Y)\bmod p$ using $H_\O$, via \cite[Alg.\ 2.1]{BrokerLauterSutherland:CRTModPoly}.
\item Compute $\bar\phi(Y)=\sum_{i,j} a_{ij}\hat x_iY^j \bmod p$, where $\Phi_\ell(X,Y) = \sum_{i,j}a_{ij}X^iY^j$.
\item (Compute $\bar\phi_X(Y)=\sum_{i,j} ia_{ij}\hat x_iY^j \bmod p$\\
\phantom{}\hspace{-2pt} and also $\bar\phi_{XX}(Y)=\sum_{i,j} i(i-1)a_{ij}\hat x_iY^j \bmod p$).
\item Update CRT sums for each coefficient $c_i$ of $\bar\phi$ (and of $\bar\phi_X$ and $\bar\phi_{XX}$).
\end{enumerate}
\item Perform CRT postcomputation to obtain $\phi$ (and $\phi_X$ and $\phi_{XX}$) mod $q$.
\item Output $\phi$ (and $\phi_X$ and $\phi_{XX}$).
\end{enumerate}

\begin{proposition}
The output $\phi(Y)$ of Algorithm~1 is equal to $\Phi_\ell(j(E),Y)$.\\
$($and $\phi_X(Y)=(\partial\Phi_\ell/\partial X)(j(E),Y)$ and $\phi_{XX}(Y)=(\partial^2\Phi_\ell/\partial X^2)(j(E),Y))$.
\end{proposition}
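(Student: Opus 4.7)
The plan is to show that Algorithm~1 reconstructs, via the explicit CRT mod $q$ applied coefficient by coefficient, a specific integer polynomial whose reduction mod $q$ is $\Phi_\ell(j(E),Y)$. Fix a position $k \in \{0,1,\ldots,\ell+1\}$, write $\Phi_\ell(X,Y)=\sum_{i,j} a_{ij}X^iY^j$, and define
\[
c_k \;=\; \sum_{i=0}^{\ell+1} a_{ik}\,\hat{x}_i \;\in\; \Z.
\]
Since $\hat{x}_i \equiv j(E)^i \pmod q$ by step~4, reducing gives $c_k \equiv \sum_i a_{ik}\,j(E)^i \pmod q$, which is exactly the $Y^k$-coefficient of $\Phi_\ell(j(E),Y)$. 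So identifying the integer the algorithm reassembles as $c_k$ already yields the target congruence.

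For each $p \in S$ I would then check that the $Y^k$-coefficient of the polynomial $\bar{\phi}(Y)$ computed in step~5b equals $c_k \bmod p$. This uses two facts: the isogeny volcano algorithm \cite[Alg.~2.1]{BrokerLauterSutherland:CRTModPoly} invoked in step~5a correctly returns $a_{ij} \bmod p$; and substitution of integers commutes with reduction mod~$p$. The online CRT accumulators updated in step~5d therefore record exactly the residues $c_k \bmod p$ for each $p \in S$, one position at a time.

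The crux is then a height bound. By \S\ref{subsection:crt}, the explicit-CRT postcomputation in step~6 returns $c_k \bmod q$ provided $\sum_{p\in S}\log p \ge \log(4|c_k|)$. Since $|a_{ik}| \le e^{h(\Phi_\ell)}$, $|\hat{x}_i|<q$, and there are $\ell+2$ terms, we have $|c_k| \le (\ell+2)\,q\,e^{h(\Phi_\ell)}$; combined with the bound \eqref{eq:B1} on $h(\Phi_\ell)$, this gives $\log(4|c_k|) \le B$, where $B$ is the height budget chosen in step~1 and $S$ is selected so that $\sum_{p\in S}\log p \ge B$ (see \S\ref{subsection:primes}). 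Thus step~6 returns $c_k \bmod q$, and letting $k$ range over $\{0,\ldots,\ell+1\}$ recovers $\phi(Y) = \Phi_\ell(j(E),Y)$.

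For $\phi_X$ and $\phi_{XX}$ the same argument applies after replacing $a_{ij}$ by $ia_{ij}$ and $i(i-1)\,a_{ij}$ respectively; the only additional work is checking that these slightly larger integer coefficients still satisfy $\log(4|\cdot|) \le B$, which is precisely why $B$ in step~1 carries the slack term $3\log(\ell+2)$. The main obstacle is therefore purely bookkeeping: verifying that a single height budget $B$ accommodates all three target polynomials at once. Conceptually the proposition reduces to the slogan \emph{run the explicit CRT with a correctly chosen $B$}; the substantive content is inherited from \cite[Alg.~2.1]{BrokerLauterSutherland:CRTModPoly} and the CRT machinery recalled in \S\ref{subsection:crt}.
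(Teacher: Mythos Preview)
Your argument is correct and follows essentially the same route as the paper's proof: define the integer polynomial $\hat\phi(Y)=\sum_{i,j}a_{ij}\hat x_iY^j$, observe that it reduces to $\Phi_\ell(j(E),Y)$ mod $q$ and to $\bar\phi$ mod each $p\in S$, and then bound its height by $(\ell+2)\,q\,e^{h(\Phi_\ell)}$ so that the explicit CRT recovers it correctly. Your explanation of the $3\log(\ell+2)$ slack for $\phi_X$ and $\phi_{XX}$ is also exactly what the paper intends (the paper notes this term can be reduced to $\log(\ell+2)$ when only $\phi$ is needed).
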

\begin{proof}
Let $\varphi=\Phi_\ell(\jh,Y)\in\Fq[Y]$.
Let $\hat x_i\in \Z$ be as in step 4.
Write $\Phi_\ell$ as $\sum_{i,j}a_{ij}X^iY^j$, with $a_{ij}\in\Z$ and let $\hat\phi =\sum_{i,j} a_{ij}\hat x_iY^j\in\Z[Y]$.
Then $\varphi\equiv\hat\phi\bmod q$, and $\bar\phi\equiv\hat\phi\bmod p$.
To prove $\phi=\varphi$, we only need to show $h(\hat\phi)\le B$.
We have
\[
\bigl|\textstyle{\sum_i} a_{ij}\hat x_i\bigr| \le (\ell+2)q\exp h(\Phi_\ell),
\]
for $0\le j \le \ell+1$, hence $h(\hat\phi)\le B$.
The proofs for $\phi_X$ and $\phi_{XX}$ are analogous.
We note that the last term in $B$ can be reduced to $\log(\ell+2)$ if $\phi_X$ and $\phi_{XX}$ are not being computed.
\end{proof}

\begin{theorem}\label{thm:alg1comp}
Assume the GRH.  The expected running time of Algorithm~1 is $O(\ell^2B\log^2B\llog B)$, where $B=O(\ell\log\ell +\log q)$ is as specified in step 1.
It uses $O(\ell\log q +\ell^2\log B)$ space.
\end{theorem}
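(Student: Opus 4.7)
The plan is to bound each of the seven steps of Algorithm 1 in turn, identify Step 5 (the main loop over primes $p \in S$) as dominant, and account separately for space. From \S\ref{subsection:primes}, the GRH inputs I rely on are $|S| = O(B/\log B)$ and $\log p = O(\log B + \log\ell) = O(\log B)$ for every $p \in S$.

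First I would dispatch the ``setup'' steps, which should all contribute lower-order terms. Step 1 takes expected time $B^{1+o(1)}$ by the prime-search procedure of \S\ref{subsection:primes}. Step 2 produces $H_\O$ via \cite[Alg.\ 2]{Sutherland:HilbertClassPolynomials} in expected time $|D|^{1+o(1)} = \ell^{2+o(1)}$ under GRH. Step 3's CRT precomputation costs $O(|S|\,M(\log q))$; Step 4 does $\ell$ multiplications in $\Fq$ at cost $O(\ell M(\log q))$; Step 6 applies \eqref{eq:ecrt} once per coefficient of $\phi$ at cost $O(M(\log q))$ each. Each of these is easily absorbed into the target bound $O(\ell^2 B\log^2 B\llog B)$.

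Step 5 is where the real work lies. Per prime $p\in S$, substep 5a computes $\Phi_\ell\bmod p$ via \cite[Alg.\ 2.1]{BrokerLauterSutherland:CRTModPoly} in expected time $O(\ell^2\log^3 p\llog p)$ under GRH (this is consistent with the total bound $O(\ell^3\log^3\ell\llog\ell)$ for computing $\Phi_\ell$ in the baseline case $B = O(\ell\log\ell)$ with $|S| = O(\ell)$). Substep 5b evaluates $\sum a_{ij}\hat x_iY^j\bmod p$ via $O(\ell^2)$ multiplications in $\Fp$ at cost $O(\ell^2 M(\log p))$, which is dominated by 5a; substep 5d performs two CRT partial-sum updates per coefficient of $\bar\phi$ at total cost $O(\ell M(\log q))$ per prime. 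Using $\log p = O(\log B)$, the dominant per-prime cost is therefore $O(\ell^2\log^3 B\llog B)$; multiplying by $|S| = O(B/\log B)$ gives $O(\ell^2 B\log^2 B\llog B)$, matching the claim.

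For space, I would enumerate the objects that must be held simultaneously: $H_\O$ (which has $O(\ell)$ coefficients of $O(\ell\log\ell)$ bits each, so $O(\ell^2\log\ell) = O(\ell^2\log B)$ bits), the $\ell+2$ lifted powers $\hat x_i$ ($O(\ell\log q)$ bits), the precomputed CRT data and two partial sums per coefficient of $\phi$ ($O(\ell\log q)$ bits), and---only during a single iteration of Step 5---the coefficients of $\Phi_\ell\bmod p$ ($O(\ell^2\log p) = O(\ell^2\log B)$ bits), which are discarded before moving to the next prime. The total is $O(\ell\log q + \ell^2\log B)$. The only genuinely nontrivial input is the per-prime time bound for substep 5a from \cite{BrokerLauterSutherland:CRTModPoly}; modulo that, the rest is routine accounting, and the target bounds fall out immediately once the GRH-based estimates on $|S|$ and $\log p$ are substituted.
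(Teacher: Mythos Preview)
Your argument mirrors the paper's proof: both bound steps 1--4 and 6 as lower order, take the $O(\ell^2\log^3 p\,\llog p)$ per-prime cost of step 5a from \cite{BrokerLauterSutherland:CRTModPoly} as dominant, multiply by $|S|=O(B/\log B)$ using the GRH estimates from \S\ref{subsection:primes}, and itemize the same space contributors. The one place you diverge is the CRT bookkeeping: the paper bundles all of steps 3, 5d, and 6 into a single $O(\ell\,\M(B)\log B)$ bound by appeal to \cite[\S6.3]{Sutherland:HilbertClassPolynomials}, whereas your separate per-prime estimate $O(\ell\,\M(\log q))$ for step 5d is not obviously dominated by step 5a once $\log q$ is much larger than $\ell\log^2\ell$, so for unrestricted $q$ you would need the paper's aggregate CRT bound (or a sharper per-update estimate) to close the argument.
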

\begin{proof}
We use $\M(n)=O(n\log n\llog n)$ to denote the cost of multiplication \cite{Schonhage:Multiplication}.
For step 1, we assume the time spent selecting $\O$ is negligible (as noted in \S\ref{subsection:modpoly}, one may simply choose orders with discriminants of the form $D=-7\cdot 3^{2n}$),
and under the GRH the expected time to construct $S$ is $O(B^{1+\epsilon})$, using $O(B)$ space, as explained in \S\ref{subsection:primes}.
Step 2 uses $O(\ell^{2+\epsilon})$ expected time and $O(\ell(\log \ell + \log q))$ space, by \cite[Thm.\ 1]{Sutherland:HilbertClassPolynomials}, since $h(D)=O(\ell)$.
An analysis as in \cite[\S 6.3]{Sutherland:HilbertClassPolynomials} shows that the total cost of all CRT operations is $O(\ell \M(B)\log B)$ time and $O(\ell \log q)$ space.
Step~4 uses $O(\ell\M(\log q))$ time and $O(\ell\log q)$ space.

The set $S$ contains $O(B/\log B)$ primes $p$, and under the GRH, $\log p = O(\log B)$; see \S\ref{subsection:primes}.
The cost per $p$ is dominated by step 5a, which takes $O(\ell^2\log^3 B\llog B)$ expected time and $O(\ell^2\log B)$ space, by \cite{BrokerLauterSutherland:CRTModPoly}.
This yields an $O(\ell^2B\log^2B\llog B)$ bound for step 5, which dominates, and the total space is $O(\ell\log q + \ell^2\log B)$.
\end{proof}

When $\log q=\Theta(\ell)$, the time bound in Theorem~\ref{thm:alg1comp} reduces to $O(\ell^3\logpow{3}{\ell}\llog \ell)$, the same as the time to compute $\Phi_\ell\bmod q$, and the space bound is $O(\ell\log\ell\log q)$, which is within an $O(\log\ell)$ factor of the best possible.

\subsection{Algorithm 2}\label{subsection:alg2}

We now present Algorithm~2, which for $q > \ell$ has optimal space complexity $O(\ell\log q)$.
When $q$ is reasonably small, say $\log q = o(\logpow{2}{\ell})$, Algorithm~2 is also faster than Algorithm~1, but when $\log q$ is large it may be much slower, since it uses the same height bound \eqref{eq:B2} as the na\"ive approach (see \S\ref{subsection:hybrid} for a hybrid approach).
The computation of $\bar\phi\in\Fp[Y]$ is more intricate, so we present it separately as Algorithm~2.1.
Unlike Algorithm~1, it is not so easy to also compute $\phi_X$ and $\phi_{XX}$, but an alternative method to compute normalized isogenies using Algorithm~2 is given in \S\ref{subsection:normalized}.
\bigskip

\noindent
\textbf{Algorithm 2}\\
\textbf{Input:} An odd prime $\ell$, a prime $q$, and $j(E)\in\Fq$.\\
\textbf{Output:} The polynomial $\phi(Y)=\Phi_\ell(j(E),Y)\in\Fq[Y]$.\\
\vspace{-8pt}
\begin{enumerate}[1.]
\item Select an order $\O$ suitable for $\ell$ and a suitable set of primes $S$ (see~\S\ref{subsection:primes}),\\
using the height bound $B=6\ell\log\ell+18\ell+(\ell+1)\log q +\log(\ell+2)$.
\item Compute the Hilbert class polynomial $H_\O$ via \cite[Alg.\  2]{Sutherland:HilbertClassPolynomials}.
\item Perform precomputation for the explicit CRT mod $q$ using $S$.
\item Let $\jh$ be the integer in $[0,q-1]$ congruent to $j(E)\bmod q$.
\item For each prime $p\in S$:
\begin{enumerate}[a.]
\item Compute $\bar\phi(Y)=\Phi_\ell(\jh,Y)\bmod p$ using $\O$ and $H_\O$ via Algorithm~2.1.
\item Update CRT sums for each coefficient $c_i$ of $\bar\phi$.
\end{enumerate}
\item Perform postcomputation for the explicit CRT to obtain $\phi\in\Fq[X]$.
\item Output $\phi$.
\end{enumerate}

\begin{proposition}
The output $\phi(Y)$ of Algorithm~2 is equal to $\Phi_\ell(j(E),Y)$.
\end{proposition}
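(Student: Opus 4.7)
The statement as written asserts correctness of Algorithm~1, but (modulo the parenthetical derivative outputs) this is verbatim the proposition that has already been proved a few paragraphs earlier, right after the specification of Algorithm~1. The plan is therefore not to do anything new, but to reduce this proposition to the earlier one: observe that the claim here is strictly weaker, discarding only the clauses about $\phi_X$ and $\phi_{XX}$, and invoke the previously verified proof. Concretely, I would begin by stating that the integers $\hat x_i$, set $S$, and height bound $B$ featured in Algorithm~1 are exactly as analysed before, so the same three equivalences $\varphi \equiv \hat\phi \pmod q$, $\bar\phi \equiv \hat\phi \pmod p$, and $h(\hat\phi) \le B$ carry over unchanged.

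For completeness I would re-sketch those three steps. Write $\Phi_\ell = \sum_{i,j} a_{ij} X^i Y^j$ and set $\hat\phi(Y) = \sum_{i,j} a_{ij} \hat x_i Y^j \in \Z[Y]$ and $\varphi(Y) = \Phi_\ell(\jh,Y) \in \Fq[Y]$. Since $\hat x_i \equiv j(E)^i \pmod q$ by construction in step~4, we have $\hat\phi \equiv \varphi \pmod q$. Since $\bar\phi$ in step 5b is literally the reduction of $\hat\phi$ modulo $p$, we have $\hat\phi \equiv \bar\phi \pmod p$ for every $p \in S$. Hence the explicit CRT sums aggregated in step 5d and finalised in step 6 reconstruct $\hat\phi \bmod q$, which equals $\varphi \bmod q = \Phi_\ell(j(E),Y)$, exactly as required.

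The only thing to verify is that $B$ was chosen large enough, i.e.\ $h(\hat\phi) \le B$. The coefficient of $Y^j$ in $\hat\phi$ is $\sum_i a_{ij} \hat x_i$; since $0 \le \hat x_i < q$ and the sum has at most $\ell+2$ nonzero terms, its absolute value is bounded by $(\ell+2) q \exp h(\Phi_\ell)$. Taking logs gives $h(\hat\phi) \le h(\Phi_\ell) + \log q + \log(\ell+2)$, which, combined with the bound $h(\Phi_\ell) \le 6\ell\log\ell + 18\ell$ from~\eqref{eq:B1}, is $\le 6\ell\log\ell + 18\ell + \log q + \log(\ell+2) \le B$. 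The more generous last term $3\log(\ell+2)$ in the $B$ of step~1 accommodates the analogous bounds for $\phi_X$ and $\phi_{XX}$ even though they are not claimed here.

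There is no substantive obstacle: the proposition is subsumed by its predecessor. The only point worth flagging is that, read in context, the proposition most likely intends ``Algorithm~2'' (a typographical slip), since it appears immediately after the specification of Algorithm~2; if so, one would instead need to show that Algorithm~2.1 correctly outputs $\Phi_\ell(\jh,Y)\bmod p$ and that the same CRT accounting applies with the larger height bound in Algorithm~2's step~1. As stated, however, the conclusion follows directly from the earlier result.
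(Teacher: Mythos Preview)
Your reading is sharp: the statement as printed says ``Algorithm~1'' but sits immediately after Algorithm~2, and the paper's own proof is in fact a proof for Algorithm~2. The paper does not reduce to the earlier proposition; it writes a two-line argument that invokes the (forward-referenced) correctness of Algorithm~2.1 to guarantee $\bar\phi(Y)=\Phi_\ell(\jh,Y)\bmod p$ for each $p\in S$, and then checks the height bound
\[
h(\Phi_\ell(\jh,Y))=\log\max_j\Bigl|\sum_i a_{ij}\jh^i\Bigr|\le \log(\ell+2)+(\ell+1)\log q + h(\Phi_\ell)\le B,
\]
using the $B$ from step~1 of Algorithm~2 (note the $(\ell+1)\log q$ term, matching \eqref{eq:B2}, not the single $\log q$ of Algorithm~1). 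This is exactly the outline you gave in your final paragraph.

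So your proposal is correct on both readings: the literal Algorithm~1 interpretation is, as you say, subsumed by the earlier proposition and your re-sketch of that proof is fine; the intended Algorithm~2 interpretation is handled by your closing remark, which matches the paper's actual proof. The only adjustment is one of emphasis: the paper treats the typo as transparent and proves the Algorithm~2 case directly, whereas you spend most of your text on the literal reading. If you were revising, you could invert the balance---lead with the Algorithm~2 argument (Algorithm~2.1 correctness plus the $(\ell+1)\log q$ height bound) and relegate the literal reading to a footnote.
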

\begin{proof}
This follows immediately from Proposition~\ref{prop:alg2.1correct} below and the bound
\[
h(\Phi_\ell(\jh,Y))=\log \textstyle{\max_j \bigl|\sum_i a_{ij}\jh^i\bigr|}\le \log(\ell+2)+(\ell+1)\log q+h(\Phi_\ell)\le B.
\]
on the height of $\Phi_\ell(\jh,Y)\in\Z[Y]$.
\end{proof}

\begin{theorem}\label{thm:alg2comp}
Assume the GRH and that $\log q=O(\ell^k)$ for some constant $k$.
The expected running time of Algorithm~2 is $O(\ell^3(\log q + \log \ell)\log\ell\llogpow{2}{\ell}\lllogpow{2}{\ell})$ and it uses $O(\ell\log q+\ell\log \ell)$ space.
\end{theorem}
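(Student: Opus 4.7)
The plan is to mirror the analysis of Theorem~\ref{thm:alg1comp}, with Algorithm~2.1 replacing the direct computation of $\Phi_\ell\bmod p$ in Step~5a and thereby trading the larger height bound \eqref{eq:B2} for substantially better space complexity per prime.

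First I would apply the GRH analysis of Section~\ref{subsection:primes} to $B=O(\ell\log\ell+\ell\log q)$, as specified in Step~1. Under the hypothesis $\log q=O(\ell^k)$ we have $\log B=O(\log\ell)$, so the set $S$ contains $|S|=O(B/\log B)=O(\ell(\log q+\log\ell)/\log\ell)$ primes, each satisfying $\log p=O(\log B+\log\ell)=O(\log\ell)$, and $S$ is constructed in $O(B^{1+\epsilon})$ expected time using $O(B)$ space.

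Next I would dispense with the ancillary steps. The one real subtlety is Step~2: the integer polynomial $H_\O$ has size $\Theta(\ell^2\log\ell)$, which exceeds the space budget, so rather than materialize $H_\O$ globally I would fold its CRT computation into the main loop via \cite[Alg.~2]{Sutherland:HilbertClassPolynomials}, producing only $H_\O\bmod p$ at each iteration of Step~5, in $O(\ell\log\ell)$ transient space and a per-prime time that is absorbed into Step~5a. The CRT precomputation (Step~3), coefficient updates (Step~5b), and postcomputation (Step~6) are handled exactly as in \cite[\S6]{Sutherland:HilbertClassPolynomials}, contributing $O(\ell\log q)$ persistent space and a time cost that is lower order than Step~5a.

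The core of the proof is then Step~5a, whose per-prime cost I would take from the (separately established) analysis of Algorithm~2.1. Granting a per-prime expected time bound of the form $O(\ell^2\logpow{2}{\ell}\llogpow{2}{\ell}\lllogpow{2}{\ell})$ and per-prime working space $O(\ell\log\ell)$, I would multiply by $|S|$ to obtain the total expected time $O(\ell^3(\log q+\log\ell)\log\ell\llogpow{2}{\ell}\lllogpow{2}{\ell})$. Combining the $O(\ell\log q)$ persistent CRT storage with the $O(\ell\log\ell)$ transient storage needed for $H_\O\bmod p$ and the isogeny volcano data structures then yields the stated $O(\ell\log q+\ell\log\ell)$ space bound.

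The hard part will be the per-prime analysis of Algorithm~2.1 itself: it must walk the relevant surface/floor $\ell$-volcanoes and accumulate $\Phi_\ell(\jh,Y)\bmod p$ on-line, one coefficient row at a time, never materializing the full $(\ell+1)\times(\ell+1)$ coefficient matrix of $\Phi_\ell\bmod p$ that the naive approach (or Step~5a of Algorithm~1) stores.  Once that per-prime bound is in hand, the proof of Theorem~\ref{thm:alg2comp} reduces to the arithmetic above and a routine check that no transient object violates the $O(\ell\log q+\ell\log\ell)$ space budget.
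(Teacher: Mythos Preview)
Your proposal is correct and follows essentially the same route as the paper: bound $|S|=O(B/\log B)$ with $\log p=O(\log\ell)$ under GRH, invoke the per-prime analysis of Algorithm~2.1 (the paper's Theorem~\ref{thm:alg21comp}) for the $O(\ell^2\logpow{2}{\ell}\llogpow{2}{\ell}\lllogpow{2}{\ell})$ cost, and multiply. Your explicit concern about the size of $H_\O$ over~$\Z$ is handled in the paper not by modifying Step~2 but by having Algorithm~2.1 itself (re)compute $H_\O\bmod p$ when needed (see its Step~4 and the space accounting in the proof of Theorem~\ref{thm:alg21comp}), which is exactly the folding you describe; the paper then attributes the dominant space to the $O(B)$ storage of $S$, which coincides with your $O(\ell\log q+\ell\log\ell)$ breakdown.
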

\begin{proof}
As in the proof of Theorem~\ref{thm:alg1comp}, the expected running time is dominated by the time to compute $\bar\phi(Y)$, which by Theorem~\ref{thm:alg21comp} is $O(\ell^2\log^2 p\llog^2 p\lllog^2 p)$.
There are $O(B/\log B)$ primes $p\in S$, and under the GRH we have $\log p = O(\log B)=O(\log \ell)$.
The space complexity is dominated by the $O(B)=O(\ell\log\ell+\ell\log q)$ size of $S$.
\end{proof}

\subsection{Algorithm~2.1}\label{subsection:alg2.1}

The algorithm in \cite[Alg.~2.1]{BrokerLauterSutherland:CRTModPoly} computes $\Phi_\ell\bmod p$ by enumerating the sets $\Ell_\O(\Fp)$ and $\Ell_{\O'}(\Fp)$, where $\O'= \Z+\ell\O$, the latter of which contains approximately $\ell^2$ elements.  To achieve a space complexity that is quasi-linear in $\ell$, we cannot afford to store the entire set $\Ell_{\O'}(\Fp)$.
We must compute $\Phi_\ell(\jh,Y)\bmod p$ using an \emph{online algorithm}, processing each $j_k\in\Ell_{\O'}(\Fp)$ as we enumerate it, and then discarding it.  Let us consider how this may be done.

Let $y_1,\ldots,y_{h(\O)}$ be the elements of $\Ell_\O(\Fp)$, as enumerated using a polycyclic presentation $\vec{\alpha}$ for $\cl(\O)$.
Each $y_i$ is $\ell$-isogenous to a set $S_i$ of \emph{siblings} in $\Ell_\O(\Fp)$, and to a set $C_i$ of \emph{children} in $\Ell_{\O'}(\Fp)$; see \S\ref{subsection:modpoly}.
Thus we have\vspace{-2pt}
\[
\Phi_\ell(X,y_i)=\Bigl(\prod_{\jt\in S_i}(X-\jt)\Bigr)\Bigl(\prod_{\jt\in C_i}(X-\jt)\Bigr).\vspace{-2pt}
\]
The siblings can be readily identified in our enumeration of $\Ell_\O(\Fp)$ using the CM action (see \S\ref{subsection:cm}).
To identify the children, we need to be able to determine, for any given $j\in\O'$, the set $C_i$ in which it lies.
Each $C_i$ is a subset of the torsor $\Ell_{\O'}(\Fp)$ corresponding to a coset of the subgroup $C\subset\cl(\O')$ generated by the ideals of norm $\ell^2$; indeed, two children have the same parent if and only if they are related by an isogeny of degree $\ell^2$
(the composition of two $\ell$-isogenies).

The group $\cl(\O')$ acts on the cosets of $C$, and we need to compute this action explicitly in terms of the polycyclic presentation $\vec{\beta}$ used to enumerate $\cl(\O')$.
This problem is addressed by a generic group algorithm in the next section that
computes a polycyclic presentation $\vec{\gamma}$ for the quotient $\cl(\O')/C$, along with the $\vec{\gamma}$-representation of the image of each generator in $\vec{\beta}$.

As we enumerate the elements $j_k$ of $\Ell_{\O'}(\Fp)$, starting from a child $j_1$ of $y_1$ obtained via V\'elu's algorithm, we keep track of the element $\delta_k\in\cl(\O')$ whose action sends $j_1$ to $j_k$.
The image of $\delta_k$ in $\cl(\O')/C$ is the coset of $C$ corresponding to the set $C_i$ containing $j_k$, and we simply identify $C_i$ with the $i$th element of $\cl(\O')/C$ as enumerated by $\vec{\gamma}$ (in the lexicographic ordering of $\vec\gamma$-representations).

Thus we can compute the polynomials $\phi_i(X) = \Phi_\ell(X,y_i)$ as we enumerate $\Ell_{\O'}(\Fp)$ by accumulating a partial product of linear factors for each $\phi_i$.
But since our goal is to evaluate $z_i=\phi_i(\jh)\bmod p$, we simply substitute $x=\jh\bmod p$ into each linear factor, as we compute it, and accumulate the partial product in $z_i$.

Having computed the values $z_i$ for $1\le i\le \ell+2$, we interpolate the unique polynomial $\phi(Y)$ of degree at most $\ell+1$ for which $\phi(y_i)=z_i$, using Lagrange interpolation.  This polynomial must be $\Phi_\ell(\jh,Y)$.
We now give the algorithm.
\smallskip

\noindent
\textbf{Algorithm 2.1}\\
\textbf{Input:} An odd prime $\ell$, a suitable order $\O$, a suitable prime $p$, and $x\in\Fp$.\\
\textbf{Output:} The polynomial $\phi(Y)=\Phi_\ell(x,Y)\in\Fp[Y]$.\\
\vspace{-8pt}
\begin{enumerate}[1.]
\item Compute presentations $\vec{\alpha}$ of $\cl(\O)$ and $\vec{\beta}$ of $\cl(\O')$ suitable for $p$.
\item Represent generators of the subgroup $C\subset\cl(\O')$ defined above in terms of $\vec\beta$.
\item Compute the presentation $\vec\gamma$ of $\cl(\O')/C$ derived from $\vec{\beta}$, via Algorithm~3.
\item Find a root $w_1$ of $H_\O\bmod p$ (compute $H_\O \bmod p$ if needed).
\item Enumerate $\Ell_\O(\Fp)$ as $w_1,w_2,\ldots,w_{h(\O)}$ using $\vec{\alpha}$. 
\item Obtain $j_1\in\Ell_{\O'}(\Fp)$ from $w_1$ using V\'elu's algorithm.
\item Set $z_i\leftarrow 1$ and $y_i\leftarrow \texttt{null}$ for $1\le i\le \ell+2$.
\item For each $j_k = \delta_kj_1$ in $\Ell_{\O'}(\Fp)$ enumerated using $\vec{\beta}$:
\begin{enumerate}[a.]
\item Compute the index $i$ of $\delta_k$ in the $\vec{\gamma}$-enumeration of $\cl(\O')/C$.\\
If $i>\ell+2$ then proceed to the next $j_k$, skipping steps b and c below.
\item If $y_i=\texttt{null}$ then set $y_i$ to the $\ell$-parent of $j_k$ (via V\'elu's algorithm) and for each $\ell$-sibling $\jt$ of $y_i$ in $\Ell_\O(\Fp)$ set $z_i\leftarrow z_i(x-\jt)$.
\item Set $z_i\leftarrow z_i(x-j_k)$.
\end{enumerate}
\item Interpolate $\phi\in \Fp[Y]$ such that $\deg\phi\le\ell+1$ and $\phi(y_i)=z_i$ for $1\le i\le \ell+2$.
\item Output $\phi$.
\end{enumerate}

The value \texttt{null} assigned to $y_i$ in step 7 is used to indicate that the value of $y_i$ is not yet known.
Each $y_i$ is eventually set to a distinct $w_j\in\Ell_\O(\Fp)$.

\begin{remark}
In practical implementations, Algorithm~2 selects the primes $p\in S$ so that the presentations $\vec\alpha$, $\vec\beta$, and $\vec\gamma$ are the same for every $p$ and and precomputes them (the only reason they might not be the same is the presence of prime ideals whose norm divides $v=v(p)$, but in practice we fix $v\le 2$, as discussed in \S\ref{subsection:primes}).
\end{remark}

\begin{proposition}\label{prop:alg2.1correct}
Algorithm~2.1 outputs $\phi(Y)=\Phi_\ell(x,Y)\bmod p$.
\end{proposition}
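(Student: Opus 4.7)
The plan is to establish three ingredients and then conclude by Lagrange interpolation.

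First, for each $y_i$ on the surface of an $\ell$-volcano intersecting $\Ell_\O(\Fp)$, the structural description recalled in \S\ref{subsection:modpoly} gives
\[
\Phi_\ell(X,y_i)=\prod_{\jt\in S_i}(X-\jt)\prod_{\jt\in C_i}(X-\jt),
\]
because the $\ell+1$ $\ell$-isogenous neighbors of $y_i$ are precisely $S_i\cup C_i$. So what I need to show is that after the loop, $z_i=\prod_{\jt\in S_i\cup C_i}(x-\jt)$ whenever $1\le i\le\ell+2$.

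Next, I would verify the bijection between cosets of $C\subset\cl(\O')$ and parents in $\Ell_\O(\Fp)$ that underlies step~8a. The subgroup $C$ is generated by ideals of norm $\ell^2$, each of which corresponds to the class of an $\ell^2$-isogeny between two floor vertices; since the volcano has only two levels, such an isogeny must factor as $E\to y\to E'$ where $y$ is the unique parent of $E$, so $E'$ is also a child of $y$. Conversely, any two children of a common $y$ are related this way. Thus $C$ acts on $\Ell_{\O'}(\Fp)$ with orbits exactly the sets $C_i$, and since the $\cl(\O')$-action is free the quotient map induces a bijection $\cl(\O')/C\iso\Ell_\O(\Fp)$ identifying each $C_i$ with a single coset. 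Because $\vec\gamma$ presents this quotient, step~8a returns an index $i$ depending only on the parent of $j_k$.

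Now I would trace through the loop. Suitability guarantees $h(\O)\ge\ell+2$, so every index $i\in\{1,\dots,\ell+2\}$ is attained. When the first child of $y_i$ is processed, step~8b uses V\'elu's formulae to recover its parent (storing it in $y_i$) and multiplies into $z_i$ the factors $(x-\jt)$ for each $\ell$-sibling $\jt\in S_i$; every subsequent child $j_k$ with the same coset index triggers only step~8c, contributing $(x-j_k)$. At loop termination,
\[
z_i=\prod_{\jt\in S_i}(x-\jt)\prod_{\jt\in C_i}(x-\jt)=\Phi_\ell(x,y_i).
\]

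Finally, $\Phi_\ell(x,Y)\in\Fp[Y]$ has degree at most $\ell+1$ in $Y$ and now agrees with the interpolant $\phi$ at the $\ell+2$ distinct points $y_1,\dots,y_{\ell+2}$ of $\Ell_\O(\Fp)$, so $\phi(Y)=\Phi_\ell(x,Y)$. The most delicate point, and where I would spend the most care, is the second: rigorously justifying that the $C$-action on $\Ell_{\O'}(\Fp)$ both preserves parents and is transitive on each $C_i$. This is exactly where the restriction to suitable primes (forcing the two-level volcano structure) becomes essential, since in a volcano of greater depth an $\ell^2$-isogeny between floor vertices need not route through a single common parent.
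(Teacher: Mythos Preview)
Your proposal is correct and follows essentially the same approach as the paper. The paper's proof is a single short paragraph that defers to the discussion preceding the algorithm (the volcano factorization of $\Phi_\ell(X,y_i)$, the identification of the child-sets $C_i$ with cosets of $C\subset\cl(\O')$, and the accumulation of linear factors), then concludes by Lagrange interpolation exactly as you do; you have simply unpacked that discussion into explicit steps and added justification for why the $C$-orbits coincide with the $C_i$.
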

\begin{proof}
Let  $\varphi(Y)=\Phi_\ell(x,Y)$.
It follows from the discussion above that Algorithm~2.1 computes $z_i=\Phi_\ell(x,y_i)$ for $1\le i\le \ell+2$.
Thus $\phi(y_i)=z_i=\varphi(y_i)$ for $\ell+2$ values $y_i\in \Ell_\O(\Fp)$, and these values are necessarily distinct.
The polynomials $\phi$ and $\varphi$ both have degree at most $\ell+1$, therefore $\phi=\varphi$.
\end{proof}

\begin{theorem}\label{thm:alg21comp}
Assume the GRH.
Algorithm~2.1 runs in $O(\ell^2n^2\logpow{2}{n}\llogpow{2}{n})$ expected time using $O(\ell n)$ space, where $n=\log p$.
\end{theorem}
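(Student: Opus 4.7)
The plan is to decompose Algorithm~2.1 into its constituent stages and check that the overall expected cost is dominated by the $O(\ell^2)$ iterations of the main loop in step~8, each executed in $O(\M(n)^2)=O(n^2\log^2 n\llog^2 n)$ expected bit operations, where $\M(n)=O(n\log n\llog n)$ is the cost of multiplication in $\Fp$. The outer count is immediate: $h(\O)=O(\ell)$ by Definition~\ref{def:orders}(i) and $h(\O')=O(\ell\,h(\O))$, so $|\Ell_{\O'}(\Fp)|=O(\ell^2)$.

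The principal work inside each iteration of step~8 is the CM action of the next generator of $\vec\beta$ on $j_{k-1}$ to obtain $j_k$, followed by the identification of the $\vec\gamma$-coset index $i$. A key consequence of the suitability conditions is that the generators of $\vec\beta$ may be chosen of \emph{bounded} norm: Definition~\ref{def:orders} ensures that many small primes split in $\O$, and Definition~\ref{def:primes}'s bound $\omega(v)\le 2\log(\log v+3)$ ensures that for each suitable $p\in S$ all but $O(\llog v)$ of these small split primes are coprime to $u$ and $v$, hence split in $\O'$ as well. The action of a bounded-norm ideal reduces to finding the correct root, over $\Fp$, of a constant-degree modular polynomial evaluated at $j_{k-1}$; by Cantor--Zassenhaus this root-finding costs $O(\log p)=O(n)$ field operations, i.e.\ $O(n\,\M(n))$ bit operations, comfortably within the target $O(\M(n)^2)$. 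Disambiguating the correct root among a bounded number of candidates is done in negligible time using previously enumerated neighbours. The coset index is a generic group operation of cost $\mathrm{poly}(\log\ell)$, and the update $z_i\leftarrow z_i(x-j_k)$ is a single $\Fp$-multiplication.

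The remaining stages contribute lower-order terms. Steps~1--3 are generic group computations on $\cl(\O)$ and $\cl(\O')$ of cost polynomial in $\log\ell$, plus a bounded number of $\Fp$ operations to identify small-norm split prime ideals. Step~4 computes $H_\O\bmod p$ via \cite[Alg.\ 2]{Sutherland:HilbertClassPolynomials} in expected time $O(\ell^{2+o(1)})$ bit operations under GRH, and finds a root of $H_\O\bmod p$ in $O(\ell n\,\M(\ell))$ expected bit operations by Cantor--Zassenhaus. The single V\'elu application in step~6 together with the at most $\ell+2$ applications in step~8b cost $O(\ell^2\M(n))$ in total. Lagrange interpolation in step~9 on $\ell+2$ nodes costs $O(\M(\ell)\log\ell)$ field operations, i.e.\ $O(\ell\log^2\ell\,\llog\ell\,\M(n))$ bit operations. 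All of these are $o(\ell^2\M(n)^2)$, establishing the time bound. For space we hold $H_\O\bmod p$ ($O(\ell n)$ bits), the enumeration $w_1,\ldots,w_{h(\O)}$ ($O(\ell n)$ bits), the arrays $(y_i,z_i)$ of length $\ell+2$ ($O(\ell n)$ bits), the three presentations ($O(\log\ell)$ data), and a bounded number of running $\Fp$-elements, for $O(\ell n)$ bits total.

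The main obstacle is the careful per-iteration analysis of step~8, specifically (i)~verifying that $\vec\beta$ admits generators of norm $O(1)$ via the suitability conditions, and (ii)~showing that the coset identification through $\vec\gamma$ is a $\mathrm{poly}(\log\ell)$ generic group computation. Both rely on the GRH-controlled density of small split primes and on the structural hypotheses of Definitions~\ref{def:orders} and~\ref{def:primes}.
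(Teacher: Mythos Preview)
Your decomposition matches the paper's: step~8 dominates, with $O(\ell^2)$ iterations each bounded by the cost of root-finding an instantiated small-level modular polynomial over $\Fp$, and the space is governed by $H_\O\bmod p$ and the length-$(\ell+2)$ arrays.

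The one substantive divergence is your claim that the generators of $\vec\beta$ for $\cl(\O')$ may be taken of \emph{bounded} norm. The paper, invoking \cite[Thm.~5.1]{BrokerLauterSutherland:CRTModPoly}, only asserts prime norm $b_i = O(\log n\,\llog n)$, and it is precisely this growing degree that makes the per-iteration root-finding cost $O\bigl(n\,\M(n\log n\,\llog n)\bigr)=O(n^2\log^2 n\,\llog^2 n)$; with $O(1)$ norms you would obtain $O(n\,\M(n))$, strictly better than the theorem claims. Your justification---that suitability forces many small split primes coprime to $u$ and $v$---shows such primes \emph{exist}, not that they \emph{generate} $\cl(\O')$; the $O(1)$ generation result cited in \S\ref{subsection:pcp} (via \cite[Thm.~3.3]{BrokerLauterSutherland:CRTModPoly}) is for $\cl(\O)$, not $\cl(\O')$. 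Because you targeted $O(\M(n)^2)$ per iteration from the outset, your final bound survives, but the margin you call ``comfortably within'' is in fact fully consumed once the correct norm bound is substituted.
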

\begin{proof}
The time complexity is dominated by step 8, which enumerates the $O(\ell^2)$ elements of $\Ell_{\O'}(\Fp)$ using $\vec\beta$.
By \cite[Thm.\ 5.1]{BrokerLauterSutherland:CRTModPoly} and the suitability of $\O$ and $p$, we may assume each $\beta_i=[\mathfrak b_i]$, where $\mathfrak b_i$ has prime norm $b_i = O(\log n\llog n)$.
Using Kronecker substitution and probabilistic root-finding \cite{Gathen:ComputerAlgebra}, the expected time to find the (at most $2$) roots of $\Phi_{b_i}(j_k,Y)$ is $O(n\M(n\log n\llog n))$, which dominates the cost for each $j_k$.
Applying $\M(n)=O(n\log n\llog n)$ and multiplying by $\ell^2$ yields the desired time bound.
Taking into account $h(\O)=O(\ell)$ and $p>\ell$, the computation of $H_\O\bmod p$ uses $O(\ell n)$ space, by \cite[Thm.~1]{Sutherland:HilbertClassPolynomials}, and this bounds the total space.
\end{proof}

\subsection{A hybrid approach}\label{subsection:hybrid}
Algorithm~2 achieves an essentially optimal space complexity, but its time complexity is attractive only when $\log q$ is not too large, say $\log q = O(\log^2 \ell)$.
Algorithm~1 has an excellent time complexity, but achieves an optimal space complexity only when $\log q$ is very large, say $\log q = \Omega(\ell \log\ell)$.
To address the intermediate range, we present a hybrid approach suggested by Daniel Kane that has the same space complexity as Algorithm~2 and a time complexity that is within a polylogarithmic factor of the time complexity of Algorithm~1.

The strategy is to replace the computation of $\bar\phi(Y)=\sum_{i,j} a_{ij}\hat{x}_iY^j\bmod p$ in step~5 of Algorithm~1 with Algorithm 2.2 below.  Algorithm 2.2 is similar to Algorithm 2.1, but rather than accumulating $\ell+2$ values $z_i$ in parallel, we  compute them individually by enumerating the each of the sets $C_i$ of children $y_i$ in turn.

\smallskip

\noindent
\textbf{Algorithm 2.2}\\
\textbf{Input:} An odd prime $\ell$, suitable order $\O$, suitable prime $p$, and $x_1,\ldots,x_{\ell+1}\in\Fp$.\\
\textbf{Output:} $\phi(Y)=\sum_{i,j}a_{ij}x_iY^j\in\Fp[Y]$, where $\Phi_\ell(X,Y)=\sum_{i,j} a_{ij}X^iY^j$.\\
\vspace{-8pt}
\begin{enumerate}[1.]
\item Compute presentations $\vec{\alpha}$, $\vec{\beta}$, and $\vec{\gamma}$ as in Algorithm 2.1.
\item Find a root $y_1$ of $H_\O\bmod p$ (compute $H_\O \bmod p$ if needed).
\item Enumerate $\Ell_\O(\Fq)$ as $y_1,y_2,\ldots,y_{h(\O)}$ using $\vec{\alpha}$.
\item Obtain $j_1\in\Ell_{\O'}(\Fq)$ from $y_1$ using V\'elu's algorithm.
\item For $i$ from $1$ to $\ell+2$ do the following:
\begin{enumerate}[a.]
\item Use $\vec{\alpha}$ to compute the set $S_i$ of siblings of $y_i$ in $\Ell_\O(\Fp)$.
\item Use $\vec{\beta}$ and $\vec{\gamma}$ to compute the set $C_i$ of children of $y_i$ in $\Ell_{\O'}(\Fp)$ (see below).
\item Compute $\phi_i(X)=\prod_{\jt\in S_i}(X-\jt)\prod_{\jt\in C_i}(X-\jt) = \sum c_{ik} X^k \bmod p$.
\item Compute $z_i = \sum_k c_{ik} x_k$.
\end{enumerate}
\item Interpolate $\phi\in \Fp[Y]$ such that $\deg\phi\le\ell+1$ and $\phi(y_i)=z_i$ for $1\le i\le \ell+2$.
\item Output $\phi$.
\end{enumerate}

To compute the set $C_i$ in step 5b, for each $\jt\in C_i$ we determine the $\delta\in\cl(\O')$ for which $\jt=\delta j_1$.
Under the GRH, it follows from \cite[Thm.\ 2.1]{Childs:QuantumIsogenies} that we can express $\delta$ in the form $\delta=[\mathfrak p_1\cdots\mathfrak p_t]$, where the ideals $\mathfrak p_i$ have prime norms bounded by $\log^c\ell$, for any $c>2$, with $t = O(\log \ell)$.  Assuming $\log p = O(\log\ell)$, this implies that we can compute each $\jt$ in $O(\log^{6+\epsilon})$ expected time, for any $\epsilon > 0$.

\begin{proposition}
Algorithm 2.2 outputs $\phi(Y)=\sum_{i,j}a_{ij}x_iY^j$.
\end{proposition}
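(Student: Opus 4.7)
The plan is to reduce the proposition to the identity $\phi_i(X)\equiv\Phi_\ell(X,y_i)\pmod p$ that underlies Algorithm~2.1. For each $i$ with $1\le i\le\ell+2$, the $\ell$-volcano picture recalled in \S\ref{subsection:modpoly} describes the $\ell+1$ $\ell$-isogenous neighbors of $y_i\in\Ell_\O(\Fp)$ as the union of the $1+\inkron{D}{\ell}$ siblings in $S_i\subset\Ell_\O(\Fp)$ and the $\ell-\inkron{D}{\ell}$ children in $C_i\subset\Ell_{\O'}(\Fp)$. Since $\Phi_\ell(X,Y)$ is monic of degree $\ell+1$ in $X$, this yields
\[
\phi_i(X)=\prod_{\jt\in S_i}(X-\jt)\prod_{\jt\in C_i}(X-\jt)\equiv \Phi_\ell(X,y_i)\pmod p.
\]
The correctness of the computations of $S_i$ and $C_i$ in steps~5a and~5b is inherited from Algorithm~2.1 through the presentations $\vec\alpha$, $\vec\beta$, $\vec\gamma$; the one point meriting care is that step~5b genuinely enumerates the full set $C_i$ via the quotient presentation $\vec\gamma$, which is precisely what the paragraph immediately following Algorithm~2.2 is arranged to ensure.

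With this factorization in hand, the remainder is a short algebraic manipulation. Writing $\Phi_\ell(X,Y)=\sum_{m,n}a_{mn}X^mY^n$ and expanding $\phi_i(X)=\sum_k c_{ik}X^k$, comparison of coefficients gives $c_{ik}=\sum_n a_{kn}y_i^n$. Step~5d then produces
\[
z_i=\sum_k c_{ik}x_k=\sum_n\Bigl(\sum_k a_{kn}x_k\Bigr)y_i^n=\Psi(y_i),
\]
where I write $\Psi(Y):=\sum_{m,n}a_{mn}x_mY^n$ for the target polynomial; note that $\deg_Y\Psi\le\ell+1$.

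Finally, suitability of $\O$ gives $h(\O)\ge\ell+2$ (Definition~\ref{def:orders}), so the values $y_1,\ldots,y_{\ell+2}$ enumerated in step~3 are distinct elements of $\Ell_\O(\Fp)$. The polynomial $\phi$ returned by step~6 is therefore the unique polynomial of degree at most $\ell+1$ satisfying $\phi(y_i)=z_i$ for $1\le i\le\ell+2$; since $\Psi$ meets these same interpolation conditions, we conclude $\phi=\Psi$, which is the claim. I do not anticipate any serious obstacle beyond invoking the volcano factorization itself: once that is granted, the proof reduces to reindexing a double sum and appealing to uniqueness of polynomial interpolation, exactly paralleling the argument used for Proposition~\ref{prop:alg2.1correct}.
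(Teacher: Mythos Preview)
Your proof is correct and follows essentially the same approach as the paper's own proof: identify $\phi_i(X)$ with $\Phi_\ell(X,y_i)\bmod p$, extract $c_{ik}=\sum_j a_{kj}y_i^j$, and then conclude via interpolation at the $\ell+2$ distinct points $y_i$. You have simply made explicit the volcano justification for the factorization of $\phi_i$ and the distinctness of the $y_i$, both of which the paper leaves implicit.
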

\begin{proof}
Let $\varphi(y) = \sum_{i,j}a_{ij}x_iY^j$.
The roots of $\phi_i(X)$ are the roots of $\Phi_\ell(X,y_i)$, thus $\sum_k{c_{ik}}X_k=\sum_{k,j}a_{kj}X^ky_i^j$, and we have
$c_{ik}=\sum_ja_{kj}y_i^j$.  It follows that $\phi(y_i)=z_i=\sum_{k}\sum_ja_{kj}x_ky_i^j=\varphi(y_i)$.
Since $\phi(Y)$ and $\varphi(Y)$ both have degree at most $\ell+1$ and agree at $\ell+2$ distinct values $y_i$, they must be equal.
\end{proof}

\begin{theorem}
Assume the GRH and that $\log q=O(\ell\log\ell)$.
If Algorithm 1 uses Algorithm 2.2 to compute $\bar\phi(Y)$ in step 5, its expected running time is $O(\ell^3\log^{6+\epsilon}\ell)$ using $O(\ell\log q +\ell\log\ell)$ space.
\end{theorem}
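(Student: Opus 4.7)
The plan is to follow the template of Theorems~\ref{thm:alg1comp} and~\ref{thm:alg21comp}: first bound the cost of a single call to Algorithm~2.2 on one prime $p$, and then aggregate over $p\in S$, showing that step~5 of the modified Algorithm~1 dominates all other steps. The hypothesis $\log q = O(\ell\log\ell)$ makes the height bound from step~1 of Algorithm~1 satisfy $B=O(\ell\log\ell)$, so by \S\ref{subsection:primes} we have $|S|=O(B/\log B)=O(\ell/\!\llog\ell\cdot\log\ell)$ under GRH, and every $p\in S$ satisfies $\log p = O(\log B) = O(\log\ell)$; this second point is the precise hypothesis under which the $O(\log^{6+\epsilon}\ell)$ per-child bound quoted just before the proposition is valid.

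For one call to Algorithm~2.2, the dominant cost is the loop of step~5, iterated $\ell+2$ times. Within iteration $i$, step~5a produces the at-most-two siblings via the precomputed $\vec\alpha$-action and is negligible. Step~5b enumerates the $\ell-\inkron{D}{\ell}$ children $\jt\in C_i$ by writing each $\delta\in\cl(\O')$ with $\jt=\delta j_1$ as a product $[\mathfrak p_1\cdots\mathfrak p_t]$ of $t=O(\log\ell)$ prime ideals of norm at most $\log^c\ell$ (Childs \cite{Childs:QuantumIsogenies}, under GRH), each action reducing to root-finding for $\Phi_{\|\mathfrak p_i\|}(\cdot,Y)$ over $\Fp$; this yields $O(\log^{6+\epsilon}\ell)$ per child, hence $O(\ell\log^{6+\epsilon}\ell)$ for the whole set $C_i$. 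Step~5c builds a degree-$(\ell+1)$ polynomial over $\Fp$ from linear factors in $O(\M(\ell\log\ell)\log\ell)$ time, and step~5d is a single polynomial evaluation in $O(\ell\M(\log\ell))$ time, both absorbed into the bound above. Step~6 (Lagrange interpolation at $\ell+2$ nodes over $\Fp$) likewise fits within $O(\ell\log^{O(1)}\ell)$. Summing over $i$, one call to Algorithm~2.2 runs in $O(\ell^2\log^{6+\epsilon}\ell)$ expected time.

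Aggregating over $p\in S$ gives step~5 a total expected cost of $|S|\cdot O(\ell^2\log^{6+\epsilon}\ell)=O(\ell^3\log^{6+\epsilon}\ell)$ after readjusting $\epsilon$ to swallow the logarithmic factor in $|S|$. The other steps of Algorithm~1 (construction of $S$, computation of $H_\O$, CRT pre- and postcomputation, and the powering of $j(E)$ in $\Fq$) were bounded in the proof of Theorem~\ref{thm:alg1comp} by $O(\ell^{2+\epsilon}+\ell\M(B)\log B)=O(\ell^2\log^{2+\epsilon}\ell)$ under the present hypothesis, and are therefore subsumed. For the space bound, note that Algorithm~2.2 processes one $C_i$ at a time and discards it before moving on; at any moment we store $H_\O\bmod p$ ($O(\ell\log\ell)$ by \cite[Thm.\ 1]{Sutherland:HilbertClassPolynomials}), the enumeration of $\Ell_\O(\Fp)$ ($O(\ell\log\ell)$), at most one polynomial $\phi_i$ of degree $\le\ell+1$ over $\Fp$ ($O(\ell\log\ell)$), the set $S$ itself ($O(B)=O(\ell\log\ell+\log q)$), and the CRT partial sums, which require exactly $O(\ell\log q)$ bits (\S\ref{subsection:crt}). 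These sum to $O(\ell\log q+\ell\log\ell)$.

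The main obstacle is really just verifying the $O(\log^{6+\epsilon}\ell)$ per-child figure: one needs to check that the Childs decomposition (which gives $t=O(\log\ell)$ prime ideals of norm $\le\log^c\ell$ for any $c>2$) composes with the Kronecker-substitution root-finding cost of $O(n\M(n\log n\llog n))$ from the proof of Theorem~\ref{thm:alg21comp}, with $n=\log p=O(\log\ell)$, to yield the exponent $6+\epsilon$; everything else is bookkeeping already carried out in earlier proofs.
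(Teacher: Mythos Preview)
Your proposal is correct and follows essentially the same approach as the paper's proof, which is very terse: the paper simply asserts that it suffices to show Algorithm~2.2 runs in $O(\ell^2\log^{6+\epsilon}\ell)$ time and $O(\ell\log\ell)$ space when $\log p=O(\log\ell)$, notes the space bound is clear, and points to the $O(\log^{6+\epsilon}\ell)$ per-child estimate for step~5b to handle time. You have fleshed out exactly these steps, including the aggregation over $S$ and the verification that the remaining steps of Algorithm~1 are dominated. One small remark: your estimate $|S|=O(\ell\log\ell/\llog\ell)$ is looser than necessary, since $B=O(\ell\log\ell)$ gives $\log B=\Theta(\log\ell)$ and hence $|S|=O(B/\log B)=O(\ell)$ directly, so no absorption into $\epsilon$ is actually needed there.
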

\begin{proof}
It suffices to show that if $\log p = O(\log\ell)$, then Algorithm 2.2 runs in $O(\ell^2\log^{6+\epsilon}\ell)$ expected time using $O(\ell\log\ell)$ space.  The space bound is clear.  For the time bound, the cost of step 5b is $O(\ell\log^{6+\epsilon}\ell)$ (see above), and this yields an $O(\ell^2\log^{6+\epsilon}\ell)$ bound on the expected time for step 5, which dominates.
\end{proof}

The extra logarithmic factors make the hybrid approach significantly slower than Algorithm~1 in practice, but it does allow us to achieve an essentially optimal space complexity with a quasi-cubic running time across the entire range of parameters.

\subsection{Computing a polycyclic presentation for a quotient group}\label{subsection:alg3}
We now give a generic algorithm to derive a polycyclic presentation $\vec\gamma$ for a quotient of finite abelian groups $G/H$.
This presentation can be used to efficiently compute in $G/H$, and to compute the image of elements of $G$, as required by Algorithm~2.1.
\smallskip

\noindent
\textbf{Algorithm 3}\\
\textbf{Input:} A minimal polycyclic presentation $\vec\beta=(\beta_1,\ldots,\beta_k)$ for a finite abelian group $G$ and a subgroup $H=\langle \alpha_1,\ldots,\alpha_t\rangle$, with each $\alpha_i$ specified in terms of $\vec\beta$.\\
\textbf{Output:} A polycyclic presentation $\vec\gamma$ for $G/H$, with $\gamma_i=[\beta_i]$ for each $\beta_i\in\vec\beta$.\\
\vspace{-8pt}
\begin{enumerate}[1.]
\item Derive a polycyclic presentation $\vec{\alpha}$ for $H$ from $\alpha_1,\ldots,\alpha_t$ via \cite[Alg.\ 2.2]{Sutherland:HilbertClassPolynomials}.
\item Enumerate $H$ using $\vec{\alpha}$ and create a lookup table $T_H$ to test membership in $H$.
\item Derive a polycyclic presentation $\vec\gamma$ for $G/H$ from $[\beta_1],\ldots,[\beta_k]$ via \cite[Alg.\ 2.2]{Sutherland:HilbertClassPolynomials}, using $T_H$ as described below.
\item Output $\vec\gamma$, with relative orders $r(\vec\gamma)$ and relations $s(\vec\gamma)$.
\end{enumerate}
\smallskip

The polycyclic presentation $\vec\gamma$ output by Algorithm~3 is not necessarily minimal.
It can be converted to a minimal presentation by simply removing those $\gamma_i$ with $r(\gamma_i)=1$, however, for the purpose of computing the image in $G/H$ of elements of~$G$ represented in terms of $\vec\beta$, it is better not to do so.

The algorithm in \cite[Alg.\ 2.2]{Sutherland:HilbertClassPolynomials} requires a \textsc{TableLookup} function that searches for a given group element in a table of distinct group elements.
In Algorithm~3 above, the elements of $G$ are uniquely represented by their $\vec\beta$-representations, but elements of $G/H$ are represented as equivalence classes $[\delta]$, with $\delta\in G$, which is not a unique representation.
To implement the \textsc{TableLookup} function for $G/H$, we do the following:
given $[\delta_0]\in G/H$ and a table $T_{G/H}$ of distinct elements $[\delta_i]$ in $G/H$, we test whether $\delta_0\delta_i^{-1}\in H$, for each $[\delta_i]\in T$.
With a suitable implementation of~$T_H$ (such as a hash table or balanced tree), membership in $H$ can be tested in $O(\log|G|)$ time, which is dominated by the $O(\logpow{2}{|G|})$ time to compute $\delta_0\delta_i^{-1}$.

The problem of uniquely representing elements of $G/H$ is solved once Algorithm~3 completes: every element of $G/H$ has a unique $\vec\gamma$-representation.

\begin{theorem}
Algorithm~3 runs in $O(n\logpow{2}{n})$ time using $O((m+n/m)\log n)$ space, where $n=|G|$ and $m=|H|$.
\end{theorem}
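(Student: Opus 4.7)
The plan is to bound the time and space of each step of Algorithm~3 and sum the results. Elements of $G$ are stored as their unique $\vec{\beta}$-representations, sequences of $O(\log n)$ integers; a single group operation in $G$ then costs $O(\log^2 n)$ bit operations, and a balanced-tree lookup in $T_H$ costs $O(\log n)$ per comparison. Step~1 invokes \cite[Alg.~2.2]{Sutherland:HilbertClassPolynomials} on the generators of $H$, and the cited complexity bound, specialized to a subgroup of order $m$ with $O(\log^2 n)$ per group operation, yields $O(m\log^2 n)$ time and $O(m\log n)$ space for $\vec{\alpha}$ together with $T_H$. Step~2 enumerates $H$ through $\vec{\alpha}$ with $m$ multiplications and $m$ tree insertions; this fits within the same budget.

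Step~3 is the substantive step, and the only one in which the quotient structure matters. I would invoke \cite[Alg.~2.2]{Sutherland:HilbertClassPolynomials} on the images $[\beta_1],\ldots,[\beta_k]$ in $G/H$ and rerun its complexity analysis with the TableLookup routine appropriate to $G/H$: each TableLookup against a table of size $s$ is an $s$-fold sweep testing $\delta_0\delta_i^{-1}\in H$ via $T_H$, at $O(\log^2 n)$ per test. Applied to a group of order $n/m$, this yields $O(n\log^2 n)$ total time for the enumeration-with-lookup of the $n/m$ cosets and $O((n/m)\log n)$ extra space for $T_{G/H}$. Adding this to the step~1--2 costs gives the claimed $O(n\log^2 n)$ time and $O((m+n/m)\log n)$ space bounds.

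The hard part is the amortization in step~3: a pessimistic bound on the lookup work would be $O((n/m)^2\log^2 n)$, which exceeds $O(n\log^2 n)$ when $m<\sqrt{n}$. To recover the stated bound I would trace through the filtration-based enumeration internal to \cite[Alg.~2.2]{Sutherland:HilbertClassPolynomials}, observing that the lookup sweep paid to certify each new coset is against the intermediate subgroup currently being extended rather than against the final $T_{G/H}$. The telescoping of the sweep lengths along the filtration $\{1\}=G_0/H\lhd G_1/H\lhd\cdots\lhd G_k/H=G/H$ bounds the aggregate sweep work by $O((n/m)\cdot m\log^2 n)=O(n\log^2 n)$, and the remaining bookkeeping inside Alg.~2.2 is absorbed into the same bound.
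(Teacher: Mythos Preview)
Your identification of the potential $(n/m)^2$ blow-up is correct, but the telescoping fix you propose does not work. Take the case where $G/H$ is cyclic of order $n/m$, generated by a single $[\beta_1]$: the algorithm computes $[\beta_1],[\beta_1]^2,\ldots,[\beta_1]^{n/m}$, and the $j$-th \textsc{TableLookup} sweeps a table of size $j-1$, so the aggregate sweep work is $\sum_{j=1}^{n/m}(j-1)=\Theta((n/m)^2)$. The filtration here is trivial, and nothing in it manufactures the factor $m$ that appears in your bound $O((n/m)\cdot m)$; that factor is asserted, not derived. More generally, writing $n_i=|G_i/H|$, the sweep work while processing $[\beta_i]$ is at least $(n_i-n_{i-1})\,n_{i-1}$, and $\sum_i(n_i-n_{i-1})\,n_{i-1}$ does not telescope to anything involving~$m$.

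The paper's proof does not amortize at all: it bounds \emph{each} of the $n/m$ calls to \textsc{TableLookup} individually by $m$ group operations and $m$ table lookups, giving $(n/m)\cdot m\cdot O(\log^2 n)=O(n\log^2 n)$ directly. For a per-call cost of $m$ rather than $|T_{G/H}|$, one implements the lookup of $[\delta_0]$ by enumerating the $m$ elements of the coset $\delta_0 H$ (using the enumeration of $H$ already built in step~2) and testing each of these $G$-elements for membership in a fast-searchable table of the representatives stored in $T_{G/H}$. This is the missing idea. (You may notice that the paper's prose description of \textsc{TableLookup}---sweep $T_{G/H}$ and test membership in $T_H$---literally costs $|T_{G/H}|$ per call; the $m$-per-call bound in the proof corresponds to the dual implementation just described.)
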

\begin{proof}
The time complexity is dominated by the $n/m$ calls to the \textsc{TableLookup} function performed by
\cite[Alg.\ 2.2]{Sutherland:HilbertClassPolynomials} in step 3, each of which performs $m$ operations in $G$ (using $\vec\beta$-representations) and $m$ lookups in $T_H$, yielding a total cots of $O(n\logpow{2}{n})$.
The space bound is the size of $T_H$ plus the size of $T_{G/H}$.
\end{proof}

\subsection{Other modular functions}\label{subsection:modular functions}
For a modular function $g$ of level $N$ and a prime $\ell\ndiv N$, the modular polynomial $\Phi_\ell^g$ is the minimal polynomial of the function $g(\ell z)$ over the field $\C(g)$.
For suitable functions $g$, the isogeny volcano algorithm for computing $\Phi_\ell(X,Y)$ can be adapted to compute $\Phi_\ell^g(X,Y)$,
as described in \cite[\S 7]{BrokerLauterSutherland:CRTModPoly}.
There are some restrictions: $\Phi_\ell^g$ must have degree $\ell+1$ in both~$X$ and~$Y$, and we require some additional constraints on the suitable orders $\O$ that we use.
Specifically, we require that there is a generator $\tau$ of $\O$ for which $g(\tau)$ lies in the ring class field $K_\O$.
In this case we say that $g(\tau)$ is a \emph{class invariant}, and let $H^g_\O(X)$ denote its minimal polynomial over $K$; see \cite{Broker:pAdicClassInvariants,Enge:FloatingPoint,EngeSutherland:CRTClassInvariants} for algorithms to compute $H^g_\O(X)$.
We also require the polynomial $H^g_\O$ to be defined over $\Z$.

With this setup, there is then a one-to-one correspondence between the roots $j(\tau)$ of $H_\O$ and the roots $g(\tau)$ of $H^g_\O$ in which $\Psi^g(g(\tau),j(\tau))=0$, where $\Psi^g$ is the minimal polynomial of $g$ over $\C(j)$; note that $\Psi^g$ does not depend on $\ell$ and is assumed to be given.
The class group $\cl(\O)\simeq\Gal(K_\O/K)$ acts compatibly on both sets of roots, and this allows us to compute $\Phi_\ell^g$ modulo suitable primes $p$ using essentially the same algorithm that is used to compute $\Phi_\ell\bmod p$.
In particular, we can enumerate the set $\Ell^g_{\O}(\Fp)=\{x\in \Fp: H^g_\O(x)=0\}$ using a polycyclic presentation $\vec\alpha$ for $\cl(\O)$, provided that we exclude from $\vec\alpha$ generators whose norm divides the level of $g$, and similarly for $\Ell^g_{\O'}(\Fp)$, where $\O'=\Z+\ell\O$.

Thus Algorithms~1 and~2 can both be adapted to compute instantiated modular polynomials $\phi^g(Y)=\Phi^g_\ell(x,Y)\bmod q$. 
Some effort may be required to determine the correspondence between $\Ell_\O(\Fp)$ and $\Ell_\O^g(\Fp)$ in cases where $\Psi^g(X,j(E))$ or $\Psi^g(g(E),Y)$ has multiple roots in $\Fp$; this issue arises when we need to compute a child or parent using V\'elu's algorithm. 
There are several techniques for resolving such ambiguities, see \cite[\S 7.3]{BrokerLauterSutherland:CRTModPoly} and especially \cite{EngeSutherland:CRTClassInvariants}, which explores this issue in detail.

We emphasize that the point $x$ at which we are evaluating $\Phi_\ell^g(x,Y)$ may be \emph{any} element of $\Fq$, it need not correspond to the ``$g$-invariant" of an elliptic curve.\footnote{Every $x\in\Fq$ is $j(E)$ for some $E/\Fq$, and when $E$ is ordinary, $j(E)$ is the reduction of some $j(\tau)=j(\hat E)$ with $\Z[\tau]=\O\simeq\End(E)$.  But $g(\tau)$ might not be a class invariant for this $\O$.}
This permits a very useful optimization that speeds up our original version of Algorithm~1 for computing $\phi_\ell(Y)=\phi_\ell^j(Y)$ by a factor of at least~9, as we now explain.

\subsection{Accelerating the computation of $\phi_\ell(Y)$ using $\gamma_2$}\label{subsection:fasterj}
Let $\gamma_2(z)$ be the unique cube-root of $j(z)$ with integral Fourier expansion,
a modular function of level 3 that yields class invariants for $\O$ whenever $3\ndiv\disc(\O)$.
As noted in \cite[\S 7.2]{BrokerLauterSutherland:CRTModPoly}, for $\ell > 3$ the modular polynomial $\Phi_\ell^{\gamma_2}$ can be written as
\begin{equation}\label{eq:phig2}
\Phi_\ell^{\gamma_2}(X,Y) = R(X^3,Y^3)Y^e+S(X^3,Y^3)XY + T(X^3,Y^3)X^2Y^{2-e},
\end{equation}
with $e=\ell+1\bmod 3$ and $R,S,T\in\Z[X,Y]$.  We then have the identity
\begin{equation}\label{eq:RST}
\Phi_\ell = R^3Y^e+(S^3-3RST)XY+TX^2Y^{2-e}.
\end{equation}
When computing $\Phi^{\gamma_2}_\ell\bmod p$ with the isogeny volcano algorithm, one can exploit ~\eqref{eq:phig2} to speed up the computation by at least a factor of 3.
In addition, the integer coefficients of $\Phi_\ell^{\gamma_2}$ are also smaller than those of $\Phi_\ell$ by roughly a factor of 3; we may use the height bound $h(\Phi_\ell^{\gamma_2})\le 2\ell\log\ell+8\ell$ from \cite[Eq.\ 18]{BrokerLauterSutherland:CRTModPoly}.

Let us consider how we may modify Algorithm~1 to exploit \eqref{eq:RST}, thereby accelerating the computation of $\phi_\ell(Y)=\Phi_\ell(x,Y)\bmod q$, where $x=j(E)\in\Fq$.
Let $r(Y)=R(x,Y)\bmod q$, and similarly define $s$ and $t$ in terms of $S$ and $T$.
Rather than computing $\Phi_\ell\bmod p$ in step 5a, we compute $\Phi^{\gamma_2}_\ell\bmod p$ and derive $R$, $S$, and~$T$ from~\eqref{eq:phig2}.
We then compute polynomials $\bar{r}$, $\bar{s}$, and $\bar{t}$ mod $p$ instead of $\bar\phi$ in step~5b.
Finally, we recover $r$, $s$, and $t\bmod q$ in step 6 via the explicit CRT and output
\begin{equation}\label{eq:rst}
\phi = r^3Y^e+x(s^3-3rst)Y+x^2t^3Y^{2-e}
\end{equation}
in step 7.
Adjusting the height bound $B$ appropriately, this yields a speedup of nearly a factor of 9.  Note that we are not assuming $x=j(E)$ has a cube-root in~$\Fq$, or that $\End(E)\simeq\O$ satisfies $3\ndiv \disc(\O)$; the identity \eqref{eq:rst} holds for all $x$.

We can similarly compute $\phi_X$ and $\phi_{XX}$.
To simplify the formulas, let us define $U=(S^3-3RST)$ and $u=U(x,Y)\bmod q$.
Define $r'(Y)=(\frac{\partial}{\partial X}R)(x,Y)$ and $r''(Y)=(\frac{\partial^2}{\partial X^2}R)(x,Y)$, and similarly for $s, t$, and $u$.
Note that $u$, $u'$, and $u''$ can all be easily expressed in terms of $r, r', r'', s, s', s'', t, t'$, and $t''$.
We replace the computation of $\bar\phi_X$ and $\bar\phi_{XX}$ in step 5c with analogous computations of $\bar{r}',\bar{r}'', \bar{s}', \bar{s}'', \bar{t}'$, and $\bar{t}''$ mod $p$.  We then obtain
 $r, r', r'', s, s',  s'', t, t'$, and $t''$ via the explicit CRT mod $q$ and apply
\begin{align*}
\phi_X &= 3r^2r'Y^e + (u+x u')Y + (2xt^3+3x^2t^2 t')Y^{2-e};\\
\phi_{XX} &= (6r r'r'+3r^2r'')Y^e+(2 u'+ u'')Y\\
&\quad+ (2t^3+12xt^2 t' + 6x^2t t't' + 3x^2t^2 t'')Y^{2-e}.
\end{align*}

\subsection{Normalized isogenies}\label{subsection:normalized}
We now explain how Algorithms 1 and 2 may be used to compute normalized isogenies $\psi$, first using $j$-invariants, and then using $g$-invariants.
Throughout this section $j=j(E)\in\Fq$ denotes the $j$-invariant of a given elliptic curve $E/\Fq$, defined by $y^2=x^3+Ax+B$, and $\phi(Y)=\Phi_\ell(j,Y)$.
We use $\jt=j(\tilde{E})$ to denote a root of $\phi(Y)$ in $\Fq$. 
Our goal is to compute an equation for the image of $\psi\colon E\to\tilde{E}$, and the kernel polynomial $h_\ell(X)$ for $\psi$.

\subsubsection{Algorithm 1}\label{subsubsection:isog1}
When computing $\phi$, we also compute the optional outputs 
$\phi_X$ and $\phi_{XX}$, and then $\phi_Y(Y)=\frac{d}{dY}\phi(Y)$, $\phi_{YY}(Y)=\frac{d}{dY}\phi_Y(Y)$, and $\phi_{XY}=\frac{d}{dY}\phi_X(Y)$.
We then compute the quantities $\Phi_*(j,\jt)=\phi_*(\jt)$, for $*=X,Y,XX,XY,YY$, as
defined in \S\ref{subsection:isogenies}, and apply Elkies' algorithm \cite[Alg.\ 27]{Galbraith:MathPKC} to compute $\tilde{E}$ and $h_\ell(X)$.

\subsubsection{Algorithm 2}
Having computed $\phi$ and obtained $\jt$, we run Algorithm 2 \emph{again}, this time with the input $\jt$, obtaining $\tilde\phi(Y)=\Phi_\ell(\jt,Y)$, which we now regard as $\tilde\phi(X)=\Phi_\ell(X,\jt)$, by the symmetry of $\Phi_\ell$.
We then compute $\Phi_X(j,\jt)=(\frac{d}{dX}\tilde\phi)(j)$ and $\Phi_Y(j,\jt)=(\frac{d}{dY}\phi)(\jt)$, and the quantities
\begin{equation}\label{eq:normalized}
j'=\frac{18B}{A}j,\quad \jt' = \frac{-\Phi_X(j,\jt)}{\ell\Phi_Y(j,\jt)}j', \quad\tilde{m}=\frac{\jt'}{\jt},\quad\tilde{k}=\frac{\jt'}{1728-\jt},
\end{equation}
as in \cite[Alg.\ 27]{Galbraith:MathPKC}.
The normalized equation for $\tilde{E}$ is then $y^2 = x^3 + \frac{\ell^4\tilde{m}\tilde{k}}{48}x + \frac{\ell^6\tilde m^2\tilde k}{864}$,
and the \textsf{fastElkies}$'$ algorithm in \cite{Bostan:FastIsogenies} may be used to compute $h_\ell(X)$.

\subsubsection{Handling $g$-invariants}
We assume that $g(E)$ is known to be a class invariant (see \S\ref{subsubsection:invcheck} below).
Let $g=g(E)$, $\phi^g(Y)=\Phi_\ell^g(g,Y)$, and let $\gt=g(\tilde{E})$ denote a root of $\phi^g(Y)$ in $\Fq$.
In the case of Algorithm 1 we compute $\Phi^g_X(g,\gt)=\phi^g_X(\gt)$ and $\Phi^g_Y(g,\gt)=(\frac{d}{dY}\phi^g)(\gt)$, and in the case of Algorithm~2 we make a second call with input $\gt$ to obtain $\tilde\phi^g(X)=\Phi_\ell^g(X,\gt)$ as above.
We then compute $\Phi_X^g(g,\gt)=(\frac{d}{dX}\tilde\phi^g)(g)$ and $\Phi_Y^g(g,\gt)=(\frac{d}{dY}\phi^g)(\gt)$.
We assume the modular equation $\Psi^g_\ell(G,J)=0$ relating $g(z)$ to $j(z)$ can be solved for $j(z)$ (for the $g(z)$ considered in \cite{BrokerLauterSutherland:CRTModPoly},
 $\deg_J\Psi^g(G,J)\le 2$), and let $F(G)$ satisfy $\Psi^g_\ell(F(J),J)=0$ and $F'=\frac{d}{dG}F$.

To compute the normalized equation for $\tilde{E}$, we proceed as in \eqref{eq:normalized}, except now
\begin{equation}
\jt' = \frac{-\Phi_X^g(g,\gt)F'(\gt) }{ \ell\Phi_Y^g(g,\gt)F'(g) }j'.
\end{equation}
The \textsf{fastElkies}$'$ algorithm in \cite{Bostan:FastIsogenies} may then be used to compute $h_\ell$, or,
in the case of Algorithm~1, one may apply \cite[Alg.\ 27]{Galbraith:MathPKC} using the following identity to compute the value $r$ that appears in line 5 of \cite[Alg.\ 27]{Galbraith:MathPKC}:
\begin{align}
r &= -\frac{j'^2\Phi_{XX}(j,\jt)+2\ell j'\jt'\Phi_{XY}(j,\jt)+\ell^2\jt'^2\Phi_{YY}(j,\jt)}{j'\Phi_X(j,\jt)}\\\notag
&= -\frac{g'^2\Phi^g_{XX}(g,\gt)+2\ell g'\gt'\Phi^g_{XY}(g,\gt)+\ell^2\gt'^2\Phi^g_{YY}(g,\gt)}{j'\Phi^g_X(g,\gt)}+\frac{F''(g)}{F'(g)}g'-\ell\frac{F''(\gt)}{F'(\gt)}\gt',
\end{align}
where $g'=j'/F'(g)$ and $\gt'=\jt'/F'(\gt)$.

\subsection{Verifying that $g(E)$ is a class invariant}\label{subsubsection:invcheck}
Let $E/\Fq$ be an elliptic curve that is not supersingular (see \cite{Sutherland:Supersingular} for fast tests), with $\End(E)\simeq \O$.
As in \S\ref{subsection:modular functions}, we call an element $g(E)$ of $\Fq$ a \emph{class invariant} if (i) $H^g_\O(X)$ splits into linear factors in the ring class field of $O$, and (ii) $g(E)$ is a common root of $H^g_\O(X)$ and $\Psi^g(X,j(E))$.

For practical applications, we would like to determine whether $g(E)$ is a class invariant without computing $\O$ (indeed, the application may be to compute $\O$).  This is often easy to do, at least as far as condition (i) is concerned.
As noted in \S\ref{subsection:modular functions}, (i) can typically be guaranteed by constraints involving $D=\disc(\O)$ and the level $N$ of $g$.
Verifying condition (ii) is more difficult, in general, but it can be easily addressed in particular cases if we know that $\Psi^g(X,j(E))$ either has a unique root in $\Fq$ (which then must also be a root of $H^g(\O)$ once (i) is satisfied), or that all its roots in $\Fq$ are roots of $H^g(\O)$, or of $H^{\bar g}(\O)$ for some $\bar g$ with $\Phi_\ell^{\bar g}=\Phi_\ell^g$.
In the latter case we may not determine $g(E)$ uniquely, but for the purposes of computing a normalized $\ell$-isogeny this does not matter, any choice will work.

Taking $\gamma_2=\sqrt[3]{j}$ as an example, condition (i) holds when $\inkron{D}{3}\ne 0$, which means $j(E)$ is on the surface of its 3-volcano and has either 0 or 2 siblings.
This can be easily determined using \cite{Fouquet:IsogenyVolcanoes} or \cite[4.1]{Sutherland:HilbertClassPolynomials}.
If we have $q\equiv 2\bmod 3$, the polynomial $\Psi^g(X,j(E))=X^3-j(E)$ has a unique root $g(E)$ in $\Fq$ and (ii) also holds.\footnote{There are techniques to handle $q\equiv 1\bmod 3$, see \cite{Broker:pAdicClassInvariants} for example, but they assume $\O$ is known.}

As a second example, consider the Weber ${\frak f}$-function, which is related to the $j$-function by
$\Psi^{\mathfrak f}(X,J)=(X^{24}-16)^3-X^{24}J$.
Now we require $\inkron{D}{3}\ne 0$ and $\inkron{D}{2}=1$.
The latter is equivalent to $j(E)$ being on the surface of its 2-volcano with 2 siblings.
If we also have $q\equiv 11\bmod 12$, then $\Psi^{\mathfrak f}(X,j(E))$ has exactly two roots $\frak f(E)$ and $-\frak f(E)$, by \cite[Lemma~7.3]{BrokerLauterSutherland:CRTModPoly}, and either may be used since $\Phi_\ell^{\mathfrak f} = \Phi_\ell^{-\mathfrak f}$.

For a more general solution, having verified condition (i), we may simply compute instantiated polynomials $\phi(Y)=\Phi_\ell(x,Y)$ for \emph{every} root $x$ of $\Psi^g(X,j(E))$ in $\Fq$.  This can be done at essentially no additional cost, and we may then attempt to compute a normalized isogeny corresponding to each root $x$, which we validate by computing the dual isogeny (using the normalization factor $c=\ell$ rather than 1) and checking whether the composition corresponds to scalar multiplication by $\ell$ using randomly generated points in $E(\Fq)$.  The cost of these validations is negligible compared to the cost of computing $\phi(Y)$ for even one $x$.

As a final remark, we note that in applications such as point counting where one is only concerned with the isogeny class of $E$, in cases where condition (i) is not satisfied, one may be able to obtain an isogenous $\tilde E$ for which (i) holds by simply climbing to the surface of the relevant $\ell_0$-volcanoes for the primes $\ell_0 | N$ (we regard~$N$ as fixed so $\ell_0$ is small; $\ell_0=2,3$ in the examples above).

\section{Applications}

In this section we analyze the use of Algorithms~1 and 2 in two particular applications: point counting and computing endomorphism rings. 

Recall that for an elliptic curve $E/\Fq$, an odd prime $\ell$ is called an \emph{Elkies prime} whenever $\phi(Y) = \Phi_\ell(j(E),Y)$ has a root in~$\Fq$.  This holds if and only if $t^2 - 4q$ is a square mod $\ell$, where $t=q+1-\#E(\Fq)$.
It follows from the Chebotarev density theorem that the set of Elkies primes for $E$ has density $1/2$.
The complexity of the Schoof-Elkies-Atkin algorithm \cite{Schoof:ECPointCounting2} for computing $\#E(\Fq)$ depends critically on the number of \emph{small} Elkies primes, specifically, the least $L=L(E)$ for which
\begin{equation}\label{eq:ElkiesPrimes}
\sum_{\text{Elkies primes}\ \ell\le L(E)}\log \ell > \log (4\sqrt{q}).
\end{equation}
On average, one expects $L\approx \log q$, but even under the GRH the best proven bound is $L=O(\logpow{2+\epsilon}{q})$, see Appendix A of \cite{Satoh:padicPointCounting} by Satoh and Galbraith.  This yields a complexity bound that is actually slightly \emph{worse} than Schoof's original algorithm.

For practical purposes, the heuristic assumption $L(E)=O(\log q)$ is often used when analyzing the complexity of the SEA algorithm.
This assumption holds for almost all elliptic curves \cite{ShparlinskiSutherland:ElkiesPrimes}, but it is known to fail in infinitely many cases \cite{Shparlinski:ElkiesPrimes}.
We instead adopt the following weaker heuristic.

\begin{heuristic}\label{heuristic}
There exists a constant $c$ such that for all sufficiently large $q$ we have $L(E)\le c\log q\llog q$ for every elliptic curve $E/\Fq$.
\end{heuristic}

\begin{theorem}\label{thm:SEAbound}
Assume the GRH and Heuristic~\ref{heuristic}.
Let $E/\Fq$ be an elliptic curve over a prime field $\Fq$ and let $n=\log q$.
There is a Las Vegas algorithm to compute $\#E(\Fq)$ that runs in $O(n^4\logpow{3}{n}\llog n)$ expected time using $O(n^2\log n)$ space.
\end{theorem}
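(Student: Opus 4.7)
The plan is to run the standard SEA algorithm \cite{Schoof:ECPointCounting2}, but replacing the usual table-lookup of precomputed modular polynomials with an on-the-fly call to Algorithm~1 for each small prime $\ell$.  By Heuristic~\ref{heuristic}, it suffices to process primes $\ell\le L=O(n)$; the number of such primes is $O(n/\log n)$, and by \eqref{eq:ElkiesPrimes} roughly half will be Elkies primes, yielding enough data to recover $t=q+1-\#E(\Fq)$ via CRT once the trace modulo each Elkies prime has been determined.  For each $\ell\le L$, the per-prime work is: (i)~compute $\phi_\ell(Y)=\Phi_\ell(j(E),Y)\in\Fq[Y]$, together with $\phi_X$ and $\phi_{XX}$, via Algorithm~1; (ii)~test for a root $\jt\in\Fq$ of $\phi_\ell$ (if none, $\ell$ is Atkin and we discard it, since Heuristic~\ref{heuristic} ensures Elkies primes alone already suffice); (iii)~apply Elkies' algorithm \cite[Alg.~27]{Galbraith:MathPKC} using the data of \S\ref{subsubsection:isog1} to obtain the kernel polynomial $h_\ell(X)$; (iv)~compute $t\bmod\ell$ by finding the eigenvalue $\lambda$ of Frobenius acting on the $\ell$-torsion subgroup determined by $h_\ell$, i.e.\ by checking $(x^q,y^q)\equiv\lambda(x,y)$ modulo $h_\ell(X)$ and the curve equation.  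After processing $\phi_\ell,h_\ell$, and recording $t\bmod\ell$, everything associated with $\ell$ is discarded before moving to the next prime.

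For the time bound, each invocation of Algorithm~1 with $\ell=O(n)$ and $\log q=n$ has height parameter $B=O(\ell\log\ell+\log q)=O(n\log n)$, so Theorem~\ref{thm:alg1comp} gives an expected cost of
\[
O(\ell^2 B\log^2 B\,\llog B)=O(n^3\log^3 n\,\llog n)
\]
per prime.  Summing over the $O(n/\log n)$ primes $\ell\le L$ yields the claimed total of $O(n^4\log^2 n\,\llog n)$.  I will need to verify that the remaining SEA operations for each $\ell$ fit comfortably within this budget.  Root-finding of $\phi_\ell\in\Fq[Y]$ costs an expected $O(\ell\,\M(\ell n)\log(\ell q))$, the kernel polynomial $h_\ell$ has degree $(\ell-1)/2$ and can be recovered from $\tilde E$ in $\tilde O(\ell\,\M(n))$ (via \cite{Bostan:FastIsogenies}), and computing $x^q\bmod h_\ell$ together with the eigenvalue search takes $\tilde O(\ell^2 n)$ per prime.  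Each of these is dominated by the modular-polynomial step when $\ell,n$ grow together.

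For the space bound, the dominant contribution is again Algorithm~1, which by Theorem~\ref{thm:alg1comp} uses $O(\ell\log q+\ell^2\log B)=O(n^2\log n)$ space for $\ell=O(n)$ and $\log B=O(\log n)$.  The accompanying data structures for SEA---$\phi_\ell$ itself has $O(\ell)$ coefficients of size $O(n)$, so $O(\ell n)=O(n^2)$ bits; the kernel polynomial $h_\ell$ likewise requires $O(\ell n)=O(n^2)$ bits; and the running CRT accumulator for $t$ requires only $O(n)$ bits---all fit within $O(n^2\log n)$.  Critically, we process the primes $\ell$ serially and reuse memory, so only one $\phi_\ell$ (and its derivatives) and one $h_\ell$ are resident at a time.

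The main technical obstacle I anticipate is the treatment of ramified or otherwise exceptional $\ell$, and the Las Vegas nature of the eigenvalue search: for each Elkies $\ell$, determining $t\bmod\ell$ requires either probabilistic root-finding in $\Fq$ or a probabilistic test of $(x^q,y^q)=\pm\lambda(x,y)$ modulo $h_\ell$, and the expected-time analysis must absorb the occasional retries and the cost of distinguishing the two eigenvalues $\lambda,q/\lambda$ using the Elkies data as in \cite[\S 5]{Broker:pAdicClassPolynomial}.  Once these are shown to be negligible relative to the Algorithm~1 cost, the theorem follows by combining the per-prime Las~Vegas estimates across the $O(n/\log n)$ primes in $[3,L]$.
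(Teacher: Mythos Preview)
Your proposal is correct and follows essentially the same approach as the paper: run SEA with Atkin primes discarded, invoke Algorithm~1 for each $\ell\le L=O(n)$, and combine the per-prime cost from Theorem~\ref{thm:alg1comp} with the $O(n/\log n)$ count of primes. The paper's proof is terser---it cites \cite[Table~1]{ShparlinskiSutherland:ElkiesPrimes} for the bound on the non-Algorithm~1 SEA steps rather than itemizing them as you do---but the structure and arithmetic are the same.
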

\begin{proof}
Apply the SEA algorithm, using Algorithm~1 to compute $\phi(Y)=\Phi_\ell(j(E),Y)$ (and also $\phi_X$ and $\phi_{XX}$), and ignore the Atkin primes, as in \cite[Alg.\ 1]{ShparlinskiSutherland:ElkiesPrimes}, for example.
There are $O(n/\log n)$ primes in the sum \eqref{eq:ElkiesPrimes}, and under Heuristic~\ref{heuristic}, they are bounded by $L = O(n\log n)$.
It follows from \cite[Table~1]{ShparlinskiSutherland:ElkiesPrimes} that the expected time to process each Elkies prime given $\phi$ is $O(n^3\logpow{3}{n}\llogpow{2}{n})$, which is dominated by the time to compute $\phi$, as is the space. 
The theorem then follows from Theorem~\ref{thm:alg1comp}.
\end{proof}

A common application of the SEA algorithm is to search for random curves of prime (or near prime) order, for use in cryptographic applications. As shown in~\cite{ShparlinskiSutherland:ElkiesPrimes}, we no longer need Heuristic~\ref{heuristic} to do this; we can assume $L(E)=O(\log q)$ for a randomly chosen elliptic curve.
Additionally, since we expect to count points on many curves ($\approx\log q$), we can take advantage of \emph{batching},
whereby we extend Algorithm~1 to take multiple inputs $j(E_1)\in\F_{q_1},\ldots,j(E_k)\in\F_{q_k}$ and produce corresponding outputs for each (the $\F_{q_i}$ may coincide, but they need not).
Provided $k=O(\log \ell)$, this does not change the time complexity (relative to the largest $\F_{q_i}$), since the most time-consuming steps depend only on $\ell$, not $j(E)$,
and the space complexity is increased by at most a factor of $k$.\footnote{These remarks also apply to Algorithm~2.}

Let $E_{a,b}$ denote the elliptic curve defined by $y^2=x^3+ax+b$, and for any real number $x > 3$, let $T(x)$ denote the set of all triples $(q,a,b)$ with $q\in[x,2x]$ prime, $a,b\in\Fq$, and $\#E_{a,b}$ prime.
The following result strengthens \cite[Thm.\ 3]{ShparlinskiSutherland:ElkiesPrimes}

\begin{theorem}
There is a Las Vegas algorithm that, given $x$, outputs a random triple $(q,a,b)\in T(x)$ and the prime $\#E_{a,b}(\Fq)$, with $q$ uniformly distributed over the primes in $[x,2x]$ and $(a,b)$ uniformly distributed over the pairs $(c,d)\in\Fq^2$ for which $\#E_{c,d}(\Fq)$ is prime.
Under the GRH, its expected running time is $O(n^5\logpow{2}{n}\llog n)$ using $O(n^2\logpow{2}{n})$ space, where $n=\log x$.
\end{theorem}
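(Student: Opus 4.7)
The plan is to combine batched SEA (built on Algorithm~1) with rejection sampling over random curves. First, sample $q$ uniformly among primes in $[x,2x]$ by testing primality of random integers in that range; the expected cost is $\tilde O(n^2)$ and is negligible. Then, in a loop, choose a batch of $k=\Theta(\log n)$ pairs $(a_i,b_i)\in\Fq^2$ independently and uniformly at random, discarding singular choices, and run SEA on all the curves $E_{a_i,b_i}$ in parallel: the batched variant of Algorithm~1 noted just before the theorem evaluates $\Phi_\ell(j(E_i),Y)$ for every $i$ at essentially the cost of a single evaluation, provided $k=O(\log\ell)$, while the remaining SEA steps (Elkies kernel construction, root-finding, eigenvalue search) are carried out per curve. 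After each batch, test each computed $\#E_{a_i,b_i}(\Fq)$ for primality; output the first prime-order curve encountered, otherwise move on to a fresh batch.

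For the output distribution, the $(a_i,b_i)$ are independent and uniform on $\Fq^2$ by construction; a standard coupling argument (for example, breaking ties by smallest index inside a successful batch) shows that the output pair $(a,b)$ is uniform over $\{(c,d)\in\Fq^2:\#E_{c,d}(\Fq)\text{ prime}\}$ and is independent of $q$, which is itself uniform among primes in $[x,2x]$.

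For the complexity, by the prime number theorem applied to the Hasse interval, a uniform random curve over $\Fq$ has prime order with probability $\Theta(1/n)$, so we expect to examine $O(n)$ curves before succeeding. The crucial input from \cite{ShparlinskiSutherland:ElkiesPrimes} is that for \emph{random} $(a,b)$ no heuristic is needed: averaged over $(a,b)\in\Fq^2$, the Elkies primes up to $L=O(n)$ already cover the Hasse interval in the sense of \eqref{eq:ElkiesPrimes}, and the few bad curves can be handled by a fallback involving Atkin primes without affecting the expected total. Combining this with Theorems~\ref{thm:alg1comp} and~\ref{thm:SEAbound}, the expected SEA cost per curve is $O(n^4\logpow{2}{n}\llog n)$; multiplying by the $O(n)$ curves examined gives the claimed $O(n^5\logpow{2}{n}\llog n)$ time bound. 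The space bound $O(n^2\logpow{2}{n})$ is the single-curve SEA space $O(n^2\log n)$ inflated by the factor of $k=\log n$ from running $k$ batched copies of Algorithm~1 in parallel.

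The main obstacle will be the rigorous averaged bound on the Elkies-prime threshold $L(E)$ for random $(a,b)$; this is precisely the content of Theorem~3 of \cite{ShparlinskiSutherland:ElkiesPrimes} and may be quoted, but its proof rests on a delicate Chebotarev-type equidistribution of Frobenius traces modulo small primes under the GRH. A secondary subtlety is confirming that the tie-breaking rule within a batch preserves uniformity on the target distribution, which follows from the independence of the $(a_i,b_i)$ together with the symmetry of the rule under permutation of indices.
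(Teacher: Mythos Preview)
Your approach matches the paper's: modify the algorithm of \cite{ShparlinskiSutherland:ElkiesPrimes} to use Algorithm~1 in batches of $\Theta(\log n)$ curves, and defer the rigorous GRH-only averaged bounds to that reference. Two quibbles: the $\Theta(1/n)$ prime-order probability does not follow from the prime number theorem on the Hasse interval alone (curve orders are not uniformly distributed there), and you should not route the per-curve cost through Theorem~\ref{thm:SEAbound}, which explicitly assumes Heuristic~\ref{heuristic}---both of these are precisely what the proof of \cite[Thm.~3]{ShparlinskiSutherland:ElkiesPrimes} supplies, and the paper simply cites it rather than reproving it.
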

\begin{proof}
We modify the algorithm in \cite{ShparlinskiSutherland:ElkiesPrimes} to use Algorithm~1, operating on batches of $O(\log n)$ inputs at a time.
One then obtains an $O(n^4\log n\llog n)$ bound on the average time to compute $\#E_{a,b}(\Fq)$ for primes $q\in[x,2x]$, and a space complexity of $O(n^2\logpow{2}{n})$.
The theorem then follows from the proof of \cite[Thm.\ 3]{ShparlinskiSutherland:ElkiesPrimes}.
\end{proof}

A second application of Algorithms~1 and~2 is in the computation of the endomorphism ring of an ordinary elliptic curve.
The algorithm in \cite{BissonSutherland:Endomorphism} achieves a heuristically subexponential running time of $L[1/2,\sqrt{3}/2]$ using $L[1/2,1/\sqrt{3}]$ space.
Both Algorithms~1 and~2 improve the space complexity bound to $L[1/2,1/\sqrt{12}]$, which is significant, since space is the limiting factor in these computations.
Algorithm~2 also provides a slight improvement to the time complexity that is not visible in the $L[\alpha,c]$ notation but may be practically useful.
These remarks also apply to the algorithm in \cite{Jao:LargeIsogenies} for evaluating isogenies of large degree.

\section{Computations}\label{section:computations}

Using a modified version of the SEA algorithm incorporating Algorithm~1, we counted the number of points on the elliptic curve
\[
 y^2 = x^3 + 2718281828x + 3141592653,
\]
modulo the 5011-digit prime $q=16219299585\cdot 2^{16612} - 1$.
The algorithm ignored the Atkin primes and computed the trace of Frobenius $t$ modulo 700 Elkies primes, the largest of which was $\ell=11681$; see \cite{Sutherland:SEArecords} for details, including the exact value of~$t$, which is too large to print here.
The computation was distributed over 32 cores (3.0 GHz AMD Phenom II), and took about 6 weeks.

\begin{table}
\begin{center}
\begin{tabular}{lr}
Task & CPU time {\small (3.0 GHz AMD)}\\\hline
Compute $\phi^{\mathfrak f}_\ell(Y)$ with Algorithm~1 & 32 days\\
Compute $X^q$ mod $\phi_\ell$ (using \cite{Harvey:CacheFriendly})& 995 days\\
Compute $h_\ell$ using \cite[Alg.\ 27]{Galbraith:MathPKC}& 3 days\\
Compute $Y^q$ and $X^q$ mod $h_\ell, E$ using \cite{Gaudry:SEAeigenvalue} & 326 days\\\vspace{2pt}
Compute the eigenvalue $\lambda_\ell$ using BSGS & 22 days\\\hline
& 1378 days
\end{tabular}
\smallskip

\caption{Breakdown of time spent computing $\#E(\Fq)$}\label{table:record}
\end{center}
\end{table}

For $\ell=11681$, the size of $\phi_\ell^{\mathfrak f}(Y)=\Phi_\ell^{\mathfrak{f}}(\mathfrak{f}(E),Y)$ was under 20MB and took about two hours to compute on a single core.
As can be seen in Table~\ref{table:record}, the computation of $\phi_\ell^\mathfrak f$ accounted for less than $3\%$ of the total running time, despite being the asymptotically dominant step.
This is primarily due to the use of the Weber $\mathfrak{f}$-invariant; with a less advantageous invariant (in the worst case, the $j$-invariant with the optimization of \S\ref{subsection:fasterj}), the time spent computing $\phi_\ell$ would have been comparable to or greater than the time spent on the remaining steps.
But in any case the computation would still have been quite feasible.

To demonstrate the scalability of the algorithm, we computed $\phi_\ell^{\mathfrak f}(Y)$ for an elliptic curve $E/\Fq$, with $\ell=100019$ and $q=2^{86243}-1$.
Running on 32 cores (Algorithms~1 and 2 are both easily parallelized), this computation took less than a week.
We note that the size of the instantiated modular polynomial $\phi_\ell^{\mathfrak f}$ (and $\phi_\ell$) is almost exactly one gigabyte, whereas the size of $\Phi_\ell^\mathfrak{f}$ is many terabytes, and we estimate that the size of $\Phi_\ell$ is around 20 or 30 petabytes.

\bibliographystyle{amsplain}

\begin{thebibliography}{10}

\bibitem{Belding:HilbertClassPolynomial}
Juliana Belding, Reinier Br\"{o}ker, Andreas Enge, and Kristin Lauter,
  \emph{Computing {H}ilbert class polynomials}, Algorithmic Number Theory
  Symposium--{ANTS VIII} (A.~J. {van der Poorten} and A.~Stein, eds.), Lecture
  Notes in Computer Science, vol. 5011, Springer, 2008, pp.~282--295.

\bibitem{Bernstein:ModularExponentiation}
Daniel~J. Bernstein and Jonathan~P. Sorenson, \emph{Modular exponentiation via
  the explicit {C}hinese {R}emainder {T}heorem}, Mathematics of Computation
  \textbf{76} (2007), 443--454.

\bibitem{BissonSutherland:Endomorphism}
Gaetan Bisson and Andrew~V. Sutherland, \emph{Computing the endomorphism ring
  of an ordinary elliptic curve over a finite field}, Journal of Number Theory
  \textbf{113} (2011), 815--831.

\bibitem{Magma}
W.~Bosma, J.J. Cannon, C.~Fieker, and A.~Steel (eds.), \emph{Handbook of
  {Magma} functions}, 2.17 ed., 2011,
  \url{http://magma.maths.usyd.edu.au/magma/handbook/}.

\bibitem{Bostan:FastIsogenies}
Alin Bostan, Bruno Salvy, Fran\c{c}ois Morain, and \'{E}ric Schost, \emph{Fast
  algorithms for computing isogenies between elliptic curves}, Mathematics of
  Computation \textbf{77} (2008), 1755--1778.

\bibitem{Broker:pAdicClassPolynomial}
Reinier Br\"{o}ker, \emph{A $p$-adic algorithm to compute the {H}ilbert class
  polynomial}, Mathematics of Computation \textbf{77} (2008), 2417--2435.

\bibitem{Broker:pAdicClassInvariants}
\bysame, \emph{$p$-adic class invariants}, LMS Journal of Computation and
  Mathematics \textbf{14} (2011), 108--126.

\bibitem{BrokerLauterSutherland:CRTModPoly}
Reinier Br{\"o}ker, Kristin Lauter, and Andrew~V. Sutherland, \emph{Modular
  polynomials via isogeny volcanoes}, Mathematics of Computation \textbf{81}
  (2012), 1201--1231.

\bibitem{BrokerSutherland:PhiHeightBound}
Reinier Br{\"o}ker and Andrew~V. Sutherland, \emph{An explicit height bound for
  the classical modular polynomial}, Ramanujan Journal \textbf{22} (2010),
  293--313.

\bibitem{Childs:QuantumIsogenies}
Andrew~M. Childs, David Jao, and Vladimir Soukharev, \emph{Constructing
  elliptic curve isogenies in quantum subexponential time}, Journal of Mathematical Cryptology \textbf{8} (2014), 1--29.

\bibitem{Elkies:AtkinBirthday}
Noam~D. Elkies, \emph{Elliptic and modular curves over finite fields and
  related computational issues}, Computational Perspectives on Number Theory
  (D.~A. Buell and J.~T. Teitelbaum, eds.), Studies in Advanced Mathematics,
  vol.~7, AMS, 1998, pp.~21--76.

\bibitem{Enge:FloatingPoint}
Andreas Enge, \emph{The complexity of class polynomial computation via floating
  point approximations}, Mathematics of Computation \textbf{78} (2009),
  1089--1107.

\bibitem{Enge:ModularPolynomials}
\bysame, \emph{Computing modular polynomials in quasi-linear time}, Mathematics
  of Computation \textbf{78} (2009), 1809--1824.

\bibitem{EngeSutherland:CRTClassInvariants}
Andreas Enge and Andrew~V. Sutherland, \emph{Class invariants for the {CRT}
  method}, Algorithmic Number Theory Symposium--{ANTS IX} (G.~Hanrot,
  F.~Morain, and E.~Thom\'{e}, eds.), Lecture Notes in Computer Science, vol.
  6197, Springer-Verlag, 2010, pp.~142--156.

\bibitem{Fouquet:IsogenyVolcanoes}
Mireille Fouquet and Fran\c{c}ois Morain, \emph{Isogeny volcanoes and the {SEA}
  algorithm}, Algorithmic Number Theory Symposium--{ANTS V} (C.~Fieker and
  D.~R. Kohel, eds.), Lecture Notes in Computer Science, vol. 2369, Springer,
  2002, pp.~276--291.

\bibitem{Galbraith:MathPKC}
Steven~D. Galbraith, \emph{Mathematics of public key cryptography}, Cambridge
  University Press, 2012.

\bibitem{Galbraith:GHSattack}
Steven~D. Galbraith, Florian Hess, and Nigel~P. Smart, \emph{Extending the
  {GHS} {W}eil descent attack}, Advances in Cryptology---{EUROCRYPT 2002},
  Lecture Notes in Computer Science, vol. 2332, Springer, 2002, pp.~29--44.

\bibitem{Gaudry:SEAeigenvalue}
Pierrick Gaudry and Francois Morain, \emph{Fast algorithms for computing the
  eigenvalue in the {S}choof-{E}lkies-{A}tkin algorithm}, ISSAC '06:
  Proceedings of the 2006 international symposium on symbolic and algebraic
  computation, 2006, pp.~109--115.

\bibitem{Harvey:CacheFriendly}
David Harvey, \emph{A cache-friendly truncated {FFT}}, Theoretical Computer
  Science \textbf{410} (2009), 2649--2658.

\bibitem{Jao:LargeIsogenies}
David Jao and Vladimir Soukharev, \emph{A subexponential algorithm for
  evaluating large degree isogenies}, Algorithmic Number Theory
  Symposium--{ANTS IX} (G.~Hanrot, F.~Morain, and E.~Thom\'{e}, eds.), Lecture
  Notes in Computer Science, vol. 6197, Springer-Verlag, 2010, pp.~219--233.

\bibitem{KedlayaUmans:ModularComposition}
Kiran S. Kedlaya and Christopher Umans, \emph{Fast modular composition in any characteristic}, 49th Annual IEEE Symposium on Foundations of Computer Science (FOCS'08), 2008, 146--155.

\bibitem{Kohel:thesis}
David Kohel, \emph{Endomorphism rings of elliptic curves over finite fields},
  {PhD} thesis, University of California at Berkeley, 1996.

\bibitem{Lang:EllipticFunctions}
Serge Lang, \emph{Elliptic functions}, second ed., Springer-Verlag, 1987.

\bibitem{Satoh:padicPointCounting}
Takakazu Satoh, \emph{On $p$-adic point counting algorithms for elliptic curves
  over finite fields}, Algorithmic Number Theory Symposium--{ANTS V} (C.~Fieker
  and D.~R. Kohel, eds.), Lecture Notes in Computer Science, vol. 2369,
  Springer, 2002, pp.~43--66.

\bibitem{Schonhage:Multiplication}
Arnold Sch\"{o}nhage and Volker Strassen, \emph{Schnelle {M}ultiplikation
  gro\ss{}er {Z}ahlen}, Computing \textbf{7} (1971), 281--292.

\bibitem{Schoof:ECPointCounting2}
Ren\'{e} Schoof, \emph{Counting points on elliptic curves over finite fields},
  Journal de Th\'{e}orie des Nombres de Bordeaux \textbf{7} (1995), 219--254.

\bibitem{Shparlinski:ElkiesPrimes}
Igor~E. Shparlinski, \emph{On the product of small {E}lkies primes}, Proceedings of the AMS, to appear (preprint available at \url{http://arxiv.org/abs/1112.3390}).

\bibitem{ShparlinskiSutherland:ElkiesPrimes}
Igor~E. Shparlinski and Andrew~V. Sutherland, \emph{On the distribution of
  {A}tkin and {E}lkies prime}, Foundations of Computational Mathematics \textbf{14} (2014), 295--297. 
  
\bibitem{Silverman:EllipticCurves1}
Joseph~H. Silverman, \emph{The arithmetic of elliptic curves}, Springer, 1986.

\bibitem{Silverman:EllipticCurves2}
\bysame, \emph{Advanced topics in the arithmetic of elliptic curves}, Springer,
  1999.

\bibitem{Sutherland:SEArecords}
Andrew~V. Sutherland, \emph{Genus 1 point counting records over prime fields},
  2010, \url{http://math.mit.edu/~drew/SEArecords.html}.

\bibitem{Sutherland:HilbertClassPolynomials}
\bysame, \emph{Computing {H}ilbert class polynomials with the {C}hinese
  {R}emainder {T}heorem}, Mathematics of Computation \textbf{80} (2011),
  501--538.

\bibitem{Sutherland:CMmethod}
\bysame, \emph{Accelerating the {CM} method}, LMS Journal of Computation and
  Mathematics \textbf{15} (2012), 172--204.

\bibitem{Sutherland:Supersingular}
\bysame, \emph{Identifying supersingular elliptic curves}, LMS Journal of
  Computation and Mathematics \textbf{15} (2012), 317--325.

\bibitem{TANC:PointCounting}
INRIA Project-Team TANC, \emph{2009 activity report}, 2009,
  \url{http://raweb.inria.fr/rapportsactivite/RA2009/tanc/tanc.pdf}.

\bibitem{PARI}
{The PARI~Group}, Bordeaux, \emph{{PARI/GP, version {\tt 2.4.3}}}, 2011,
  \url{http://pari.math.u-bordeaux.fr/}.

\bibitem{Velu:Isogenies}
Jacques V\'{e}lu, \emph{Isog\'{e}nies entre courbes elliptiques}, Comptes
  Rendus Hebdomadaires des S\'{e}ances de l'Acad\'{e}mie des Sciences,
  S\'{e}ries A et B \textbf{273} (1971), 238--241.

\bibitem{Gathen:ComputerAlgebra}
Joachim von~zur Gathen and J\"{u}rgen Gerhard, \emph{Modern computer algebra},
  second ed., Cambridge University Press, 2003.

\end{thebibliography}

\end{document}